\documentclass[a4paper, 12pt, reqno]{amsart}

\pdfoutput=1

\usepackage[utf8]{inputenc}
\usepackage[margin=2.25cm, headsep=0.75cm, footskip=0.75cm]{geometry}
\usepackage{amsmath, amsthm, amssymb, mathtools, physics}
\usepackage[hang, flushmargin]{footmisc}
\usepackage[english]{isodate}
\usepackage{enumitem}
\usepackage[dvipsnames]{xcolor}
\usepackage[colorlinks=true, linkcolor=blue, linkbordercolor=blue, citecolor=red, citebordercolor=red, urlcolor=indigo, linktocpage=true]{hyperref}
\usepackage[capitalise, nameinlink, noabbrev, nosort]{cleveref}
\usepackage{doi}

\definecolor{indigo}{HTML}{492DA5}

\allowdisplaybreaks

\setenumerate{listparindent=\parindent}

\makeatletter\g@addto@macro\bfseries{\boldmath}\makeatother

\makeatletter
\let\origsection\section
\renewcommand\section{\@ifstar{\starsection}{\nostarsection}}
\newcommand\sectionspace{\vspace{0.5ex}}
\newcommand\nostarsection[1]{\sectionspace\origsection{#1}\sectionspace}
\newcommand\starsection[1]{\sectionspace\origsection*{#1}\sectionspace}
\makeatother


\setlist[enumerate]{font=\normalfont}
\crefname{enumi}{}{}
\crefname{enumii}{}{}

\newlist{caselist}{enumerate}{1}
\setlist[caselist, 1]{label=\textbf{Case~\arabic*:}, ref=\arabic*, wide, labelwidth=0em, labelindent=0em, topsep=1ex, itemsep=1ex}
\crefname{caselisti}{Case}{Cases}


\numberwithin{equation}{section}
\crefname{equation}{Equation}{Equations}

\crefname{formula}{Formula}{Formulae}

\crefname{condition}{condition}{conditions}
\creflabelformat{condition}{#2#1#3}

\newtheorem{theorem}{Theorem}[section]

\crefname{thm}{Theorem}{Theorems}

\newtheorem{thmx}{Theorem}

\crefname{thmx}{Theorem}{Theorems}

\newtheorem{lemma}[theorem]{Lemma}
\crefname{lemma}{Lemma}{Lemmas}

\newtheorem{prop}[theorem]{Proposition}
\crefname{prop}{Proposition}{Propositions}

\crefname{cor}{Corollary}{Corollaries}

\theoremstyle{definition}

\newtheorem{definition}[theorem]{Definition}
\crefname{definition}{Definition}{Definitions}

\newtheorem{notation}[theorem]{Notation}
\crefname{notation}{Notation}{Notations}

\theoremstyle{remark}

\newtheorem{remark}[theorem]{Remark}
\crefname{remark}{Remark}{Remarks}

\crefname{example}{Example}{Examples}

\newcommand{\ve}{\varepsilon}
\newcommand{\F}{\mathbb{F}}
\newcommand{\N}{\mathbb{N}}
\newcommand{\T}{\mathbb{T}}
\newcommand{\Z}{\mathbb{Z}}
\newcommand{\EE}{\mathcal{E}}
\newcommand{\GG}{\mathcal{G}}
\newcommand{\II}{\mathcal{I}}
\newcommand{\Eo}{\EE^{(0)}}
\newcommand{\Ec}{\EE^{(2)}}
\newcommand{\Go}{\GG^{(0)}}
\newcommand{\Gc}{\GG^{(2)}}
\newcommand{\IG}{\II^\GG}
\newcommand{\IE}{\II^\EE}
\renewcommand{\pb}[1]{p^{(#1)}}
\newcommand{\Bb}[1]{B^{(#1)}}
\newcommand{\Cb}[1]{C^{(#1)}}
\newcommand{\Eb}[1]{\EE_{#1}}
\newcommand{\Gb}[1]{\GG_{#1}}
\newcommand{\pbinv}[1]{p^{(#1^{-1})}}
\newcommand{\Bbinv}[1]{B^{(#1^{-1})}}
\newcommand{\Cbinv}[1]{C^{(#1^{-1})}}
\newcommand{\pw}{\pb{w}}
\newcommand{\Bw}{\Bb{w}}
\newcommand{\Cw}{\Cb{w}}
\newcommand{\Ew}{\Eb{w}}
\newcommand{\Gw}{\Gb{w}}
\newcommand{\pwinv}{\pbinv{w}}
\newcommand{\Bwinv}{\Bbinv{w}}

\newcommand{\pd}{\pb{d}}
\newcommand{\Bd}{\Bb{d}}
\newcommand{\Cd}{\Cb{d}}
\newcommand{\EEb}{\Eb{b}}
\newcommand{\GGb}{\Gb{b}}
\newcommand{\pdinv}{\pbinv{d}}

\newcommand{\Cdinv}{\Cbinv{d}}
\newcommand{\nbhd}[3]{{#1}_{#3}^{#2}}
\newcommand{\locsec}[3]{{#1}_{#2}^{(#3)}}
\newcommand{\id}{\operatorname{id}}
\newcommand{\Iso}{\operatorname{Iso}}

\newcommand{\restr}[1]{\ensuremath{\vert_{#1}}}
\newcommand{\starx}[1]{\ensuremath{*_{#1}}}
\newcommand{\xstary}[2]{\ensuremath{\,{}_{#1}{*}_{#2}\,}}

\hypersetup{pdfauthor={Becky Armstrong, Abraham C.S. Ng, Aidan Sims, and Yumiao Zhou}, pdftitle={A twist over a minimal \'etale groupoid that is topologically nontrivial over the interior of the isotropy}}

\date{\today}
\title[A topologically nontrivial twist]{A twist over a minimal \'etale groupoid that is topologically nontrivial over the interior of the isotropy}

\author[Armstrong]{Becky Armstrong}
\author[Ng]{Abraham C.S. Ng}
\author[Sims]{Aidan Sims}
\author[Zhou]{Yumiao Zhou}
\address[B.~Armstrong]{School of Mathematics and Statistics, Victoria University of Wellington, Wellington 6012, NEW ZEALAND}
\email{\href{mailto:becky.armstrong@vuw.ac.nz}{becky.armstrong@vuw.ac.nz}}
\address[A.C.S.~Ng]{School of Mathematics and Statistics, The University of Sydney, NSW 2006, AUSTRALIA}
\email{\href{mailto:abraham.ng@sydney.edu.au}{abraham.ng@sydney.edu.au}}
\address[A.~Sims and Y.~Zhou]{School of Mathematics and Applied Statistics, University of Wollongong, NSW 2522, AUSTRALIA}
\email[A.~Sims]{\href{mailto:asims@uow.edu.au}{asims@uow.edu.au}}
\email[Y.~Zhou]{\href{mailto:ymchou1989@outlook.com}{ymchou1989@outlook.com}}

\subjclass[2020]{18B40 (primary), 22A22 (secondary)}
\keywords{groupoid, twist, circle bundle, isotropy}
\thanks{This research was funded by a University of Wollongong AEGiS Connect Grant; the Australian Research Council grants DP180100595 and DP200100155; the Deutsche Forschungsgemeinschaft (DFG, German Research Foundation) under Germany's Excellence Strategy -- EXC 2044 -- 390685587, Mathematics M\"unster -- Dynamics -- Geometry -- Structure; the Deutsche Forschungsgemeinschaft (DFG, German Research Foundation) -- Project-ID 427320536 -- SFB 1442; the ERC Advanced Grant 834267 -- AMAREC; and the Marsden Fund of the Royal Society of New Zealand (grant number 21-VUW-156)}

\begin{document}

\begin{abstract}
We present an example of a twist over a minimal Hausdorff \'etale groupoid such that the restriction of the twist to the interior of the isotropy is not topologically trivial; that is, the restricted twist is not induced by a continuous $2$-cocycle.
\end{abstract}

\maketitle

\section{Introduction}

The theory of Cartan subalgebras in operator algebras deals with the question of recognising when an operator algebra together with a distinguished abelian subalgebra arises as the closure of the convolution algebra of compactly supported functions on a groupoid, possibly twisted by cohomological data. The study of this question began with Feldman and Moore \cite{FM1975, FM1977I, FM1977II} in the context of von Neumann algebras. Their theorem says that every Cartan pair of von Neumann algebras arises from a measured equivalence relation $R$ and a measurable circle-valued $2$-cocycle on $R$. The corresponding question for C*-algebras took longer to answer. Renault \cite[Theorem~II.4.15]{Renault1980} obtained an exact analogue of Feldman and Moore's results when the Cartan subalgebra is densely spanned by projections; but more general commutative subalgebras required Kumjian's notion of a \emph{twist} (essentially a principal $\T$-bundle that is also a topological groupoid) over an \'etale groupoid \cite{Kumjian1986}; and Renault \cite{Renault2008} subsequently generalised Kumjian's work, which applied only to second-countable principal groupoids, to the more general situation of second-countable topologically principal groupoids. Raad \cite{Raad2022} later generalised Renault's work to the most general possible situation (see \cite[Theorem~3.1]{ABCCLMR2023}) of twists over effective groupoids.

The key difference between the measurable and topological settings, as recognised by Kumjian, is the existence of circle bundles that are locally trivial but not globally trivial. In all of the constructions described above, there is a natural way to construct from a pair $(A,B)$ a \emph{twist}: a principal circle bundle $\EE$ (measurable in the setting of \cite{FM1975} or topological in the setting of \cite{Kumjian1986, Renault2008}) over the groupoid $\GG$ of germs for the action of the normaliser of $B$ in $A$ on the spectrum of $B$ in such a way that the reduced C*-completion of the convolution algebra of sections of the bundle coincides with $A$. For measurable bundles as in \cite{FM1975} one can always choose a measurable section of the bundle, and then the measurable $2$-cocycle appearing in Feldman and Moore's theorem is the obstruction to this section being a homomorphism. Likewise, when the spectrum of $A$ is totally disconnected, so is the groupoid $\GG$, and so the bundle $\EE$ admits a continuous global section, once again yielding the continuous $2$-cocycle of Renault's result in \cite{Renault1980}. Kumjian's innovation was to describe a C*-algebra constructed directly from the bundle $\EE$ without the need to convert to a $2$-cocycle.

Since then, the theory of twisted groupoid C*-algebras has largely concerned itself with twists and their C*-algebras rather than with continuous $2$-cocycles. In particular, the first author proved an automatic-injectivity theorem \cite[Theorem~6.3]{Armstrong2022} for homomorphisms $\pi$ of C*-algebras of twists $(\EE, \GG)$ over Hausdorff \'etale groupoids, whose key hypothesis is that $\pi$ should be injective on the subalgebra corresponding to the interior of the isotropy in $\GG$. This raises a natural question, and one that the authors of this paper have been asked a number of times over the last few years: ``I'd rather work with cocycles; do I \emph{have} to deal with twists?'' More precisely, is there a twist satisfying the hypotheses of the injectivity theorem of \cite{Armstrong2022} that can't be handled by results like those of \cite{ABS2024} for continuous $2$-cocycles?

The simplest example of a twist that does not arise from a $2$-cocycle is due to Kumjian---communicated privately to the second-named author, and described and generalised in \cite[Section~4]{Kumjian1986}. Muhly and Williams discuss another example of a twist over a principal groupoid that is not isomorphic to the twist arising from a continuous $2$-cocycle in \cite[Example~2.1]{MW1992}; but since our construction is more closely related to Kumjian's, we focus on his example. We give the details in \cref{section: Kumjian's example}, but the idea is as follows. Fix a nontrivial principal $\T$-bundle $p\colon B \to X$. Let $\GG$ be the transformation groupoid for $\Z_2$ interchanging two copies of $X$. The twist $\EE$ comprises a copy of $(X \sqcup X) \times \T$ over $\Go \cong X \sqcup X$, and copies of $B$ and its conjugate bundle $\overline{B}$ over the nontrivial arrows. Multiplication is defined using multiplication in $\T$, the natural actions of $\T$ on $B$ and $\overline{B}$, and the standard isomorphisms $(B \starx{X} \overline{B}) / \T \cong X \times \T$ and $(\overline{B} \starx{X} B) / \T \cong X \times \T$. This twist cannot arise from a $2$-cocycle because it contains a copy of the nontrivial bundle $B$, while twists arising from $2$-cocycles are topologically trivial.

However, this twist is not minimal (each orbit has just two points), and its reduction to the interior of the isotropy of $\GG$ is the trivial twist over $X \sqcup X$, which arises from a (trivial) continuous $2$-cocycle. In particular, this example does not answer the question above. Since twisted groupoid C*-algebras have attracted significant recent interest (\cite{Armstrong2022, ABCCLMR2023, ABS2024, AO2022, BR2021, BCLM2025, BEFPR2021, BFPR2021, DGN2022, DGNRW2020, Li2020, LR2019, Seaton1, Seaton2}), we provide here an example that does answer the question (see \cref{section: our twist example}), establishing the following result.

\begin{thmx} \label{theorem: main}
There exists a twist $\EE$ over a minimal Hausdorff \'etale groupoid $\GG$ such that the induced twist $\IE$ over the interior $\IG$ of the isotropy of $\GG$ does not come from a $2$-cocycle.
\end{thmx}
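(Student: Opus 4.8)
The plan is to take the simplest minimal \'etale groupoid whose isotropy has interior strictly larger than its unit space, and to equip it with a twist that restricts over that interior to disjoint copies of a fixed nontrivial circle bundle. First I would fix $\alpha \in \T^2$ generating a dense subgroup, so that $\varphi\colon x \mapsto x+\alpha$ is a minimal homeomorphism of the $2$-torus, and set
\[
\GG \;=\; \bigl(\T^2 \rtimes_\varphi \Z\bigr) \times \Z \;\cong\; \T^2 \rtimes \Z^2 ,
\]
where $\Z^2$ acts on $\T^2$ through its first coordinate by powers of $\varphi$. This $\GG$ is a second-countable Hausdorff \'etale groupoid, and it is minimal because the $\GG$-orbit of any point of $\Go = \T^2$ is a $\varphi$-orbit, hence dense. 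Since $\varphi$ has no periodic points, $\operatorname{Iso}(\GG) = \{(x,0,n) : x \in \T^2,\ n \in \Z\}$, which is open in $\GG$; thus $\IG \cong \T^2 \times \Z$, the trivial bundle of copies of $\Z$ over $\T^2$. The role of the extra $\Z$-factor is precisely to force $\IG \ne \Go$: a twist always restricts over $\Go$ to the trivial bundle $\Go \times \T$, so any nontriviality can only be detected on genuine isotropy.

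Next I would fix a nontrivial principal $\T$-bundle $p\colon B \to \T^2$ --- say one of Euler number $1$ (a model of its total space is the Heisenberg nilmanifold, though no explicit model is needed). Because $\varphi$ is homotopic to $\id_{\T^2}$ we have $\varphi^*B \cong B$, so I may fix a homeomorphism $\tau\colon B \to B$ covering $\varphi$ and commuting with the $\T$-action. Writing $B^{\otimes n}$ for the $n$-th power of $B$ in $\operatorname{Pic}(\T^2)$ (so $B^{\otimes 0} = \T^2 \times \T$ and $B^{\otimes(-1)} = \overline{B}$), and viewing the arrow space $\GG = \bigsqcup_{(m,n)\in\Z^2}\T^2$, I would build $\EE$ by placing $B^{\otimes n}$ over the $(m,n)$-th sheet of $\GG$ and declaring the product of composable elements $b$ (in the fibre of $B^{\otimes n}$ over the $(m,n)$-th sheet) and $b'$ (over the $(m',n')$-th sheet) to be
\[
b \otimes \bigl(\tau^{\otimes n'}\bigr)^{m}(b') \;\in\; B^{\otimes(n+n')} \quad\text{over the } (m+m',\,n+n')\text{-th sheet,}
\]
where $\tau^{\otimes n'}$ is the lift of $\varphi$ induced on $B^{\otimes n'}$, so its $m$-th iterate carries the fibre of $B^{\otimes n'}$ over the source of the first arrow onto the fibre over its range before tensoring. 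One then checks the twist axioms: $\T$-equivariance is immediate from that of $\tau$ and of $\otimes$; associativity reduces to $(\tau^{\otimes n})^{m}(\tau^{\otimes n})^{m'} = (\tau^{\otimes n})^{m+m'}$ and $\tau^{\otimes(n+n')} = \tau^{\otimes n} \otimes \tau^{\otimes n'}$ together with associativity of $\otimes$; local triviality and Hausdorffness are inherited sheet-by-sheet from $B$; and the unit fibre is $B^{\otimes 0} = \T^2 \times \T$, so $\Eo = \Go \times \T$ embeds correctly.

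It then remains only to observe that $\IE = \EE\restr{\IG}$ is $\bigsqcup_{n\in\Z} B^{\otimes n} \to \bigsqcup_{n\in\Z}\T^2 = \IG$, whose underlying principal $\T$-bundle is nontrivial because its $n=1$ sheet is $B$. Since the twist attached to any continuous $2$-cocycle has a globally trivial underlying $\T$-bundle, and an isomorphism of twists restricts to an isomorphism of underlying $\T$-bundles, $\IE$ does not come from a $2$-cocycle. This is exactly the obstruction Kumjian exploits (cf.\ \cref{section: Kumjian's example}), but now $\GG$ is minimal and the nontrivial bundle lives over the interior of the isotropy rather than over free arrows, which is what \cref{theorem: main} demands.

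The step I expect to be the main obstacle is verifying that the displayed formula really defines an associative multiplication making $\EE$ a twist: this calls for pinning down the source/range conventions for $\T^2 \rtimes_\varphi \Z$, carefully tracking which fibres $\tau$, $\tau^{-1}$ and their tensor powers transport points between, and arranging the identifications $B^{\otimes m} \otimes B^{\otimes n} \cong B^{\otimes(m+n)}$ and the lift $\tau$ coherently so that no $2$-cocycle-type ambiguity is introduced on the base. The one non-formal ingredient, the existence of $\tau$, is supplied by homotopy-invariance of the Euler class.
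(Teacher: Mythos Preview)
Your approach is correct and genuinely different from the paper's. The paper also starts from an irrational rotation on $\T^2$ and a nontrivial bundle $B\to\T^2$, but takes $\GG=\T^2\rtimes_\alpha\F_2$ with the \emph{free} group on two generators $a,b$ acting via $\alpha_a=\varphi$, $\alpha_b=\id$; the twist is built by placing the trivial bundle over each $a^{\pm1}$-letter and $B$ (resp.\ $\overline{B}$) over each $b^{\pm1}$-letter, then forming balanced fibred products along reduced words. Because words in $\F_2$ do not commute, an element over an $a$-sheet never has to be pushed past an element over a $b$-sheet, so no lift $\tau$ of $\varphi$ to $B$ is ever invoked; the construction therefore works for \emph{any} locally compact Hausdorff $X$ carrying both a minimal homeomorphism and a nontrivial circle bundle, without the homotopy-invariance input you rely on. Your $\Z^2$ version buys a much simpler groupoid (abelian isotropy $\IG\cong\T^2\times\Z$ rather than $\T^2\times\ker(\ell_a)$ with $\ker(\ell_a)$ an infinite-rank free group) and a cleaner description of $\IE$ as $\bigsqcup_n B^{\otimes n}$, at the cost of the extra ingredient $\tau$ and the coherence checks you flag. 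Those checks do go through: defining $B^{\otimes n}$ concretely as an iterated balanced product makes the identifications $B^{\otimes m}\otimes B^{\otimes n}\cong B^{\otimes(m+n)}$ strictly associative, and the $\T$-equivariance of $\tau$ forces $\tau\otimes\bar\tau$ to correspond to $(x,z)\mapsto(\varphi(x),z)$ under the canonical pairing $B\otimes\overline{B}\cong\T^2\times\T$, which is exactly what your associativity computation needs.
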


\section{Preliminaries}

We need some background on principal circle bundles, groupoids, $2$-cocycles, and twists.

\begin{definition} \label{definition: principal circle bundle}
A \emph{(locally trivial) principal circle bundle}---henceforth referred to as a \emph{principal $\T$-bundle}---$(B,X,p)$ consists of locally compact Hausdorff spaces $B$ and $X$, a continuous surjection $p\colon B \to X$, and a continuous left action $\cdot\colon \T \times B \to B$, such that
\begin{enumerate}[label=(\roman*)]
\item the action of $\T$ on $B$ is \emph{fibre-preserving}: $p(z \cdot b) = p(b)$ for all $z \in \T$ and $b \in B$;
\item \label[condition]{condition: principality} $p$ is \emph{principal}: the map
\[
\T \times B \ni (z,b) \mapsto (z \cdot b, b) \in B \xstary{p}{p} B \coloneqq \{ (b_1,b_2) \in B \times B : p(b_1) = p(b_2) \}
\]
is a homeomorphism; and
\item \label[condition]{condition: local triviality} $p$ is \emph{locally trivial}: for each $x \in X$ there exist an open neighbourhood $U^x \subseteq X$ of $x$ and a continuous map $S_x\colon U^x \to B$ such that $p(S_x(u)) = u$ for all $u \in U^x$.
\end{enumerate}
\end{definition}

For each $x \in X$, we define $B_x \coloneqq p^{-1}(x)$. \Cref{condition: principality} implies that for each $b \in B$, the map $\T \ni z \mapsto z \cdot b \in B_{p(b)}$ is a homeomorphism. In particular, $\T$ acts \emph{freely} on $B$, in the sense that $z \cdot b = b \implies z = 1$; and $\T$ acts \emph{fibrewise transitively}, in the sense that $p(b_1) = p(b_2) \implies b_2 \in \T \cdot b_1$.

\begin{notation}
Given two elements $b_1$ and $b_2$ in the same fibre of a principal $\T$-bundle, we write $\langle b_1, b_2 \rangle$ for the unique element of $\T$ such that $\langle b_1, b_2 \rangle \cdot b_2 = b_1$.
\end{notation}

\begin{remark} \label{remark: inner product properties}
Fix $z \in \T$, and let $b_1$ and $b_2$ be in the same fibre of a principal $\T$-bundle. Then $\langle z \cdot b_1, b_2 \rangle \cdot b_2 = z \cdot b_1 = z \langle b_1, b_2 \rangle \cdot b_2$, and so $\langle z \cdot b_1, b_2 \rangle = z \langle b_1, b_2 \rangle$. Similarly, $\langle b_1, z \cdot b_2 \rangle = \overline{z} \langle b_1, b_2 \rangle$, and $\langle z \cdot b_1, b_1 \rangle = z = \langle b_1, \overline{z} \cdot b_1 \rangle$. Furthermore, $\overline{\langle b_1, b_2 \rangle} = \langle b_2, b_1 \rangle = \langle \overline{b_1}, \overline{b_2} \rangle$.
\end{remark}

\begin{remark} \label{remark: inner product continuous and surjective}
Given a principal $\T$-bundle $p\colon B \to X$, the map
\[
B \xstary{p}{p} B \ni (b_1, b_2) \mapsto \big(\langle b_1, b_2 \rangle, b_2\big) \in \T \times B
\]
is the inverse of the homeomorphism $\T \times B \ni (z,b) \mapsto (z \cdot b, b) \in B \xstary{p}{p} B$ of \cref{condition: principality}. It follows that the map $B \xstary{p}{p} B \ni (b_1, b_2) \mapsto \langle b_1, b_2 \rangle \in \T$ is continuous and surjective.
\end{remark}

The maps $S_x\colon U^x \to B$ of \cref{condition: local triviality} are called \emph{continuous local sections} of $p$. Given an open set $U \subseteq X$, any continuous local section $S\colon U \to B$ induces a homeomorphism
\[
U \times \T \ni (u,z) \mapsto z \cdot S(u) \in p^{-1}(U).
\]
Conversely, given an open set $U \subseteq X$ and a homeomorphism $\zeta\colon U \times \T \to p^{-1}(U)$ satisfying $p(\zeta(u,z)) = u$ for all $(u,z) \in U \times \T$, the map $U \ni u \mapsto \zeta(u,1) \in B$ is a continuous local section of $p$. A collection $\{ (U^x,\zeta_x) : x \in X \}$ is called a \emph{local trivialisation} of $p$ if each $U^x$ is an open neighbourhood of $x$ and each $\zeta_x$ is a homeomorphism $U^x \times \T \to p^{-1}(U^x)$ satisfying $p(\zeta_x(u,z)) = u$ for all $(u,z) \in U^x \times \T$.

A \emph{continuous global section} for $p$ is a continuous local section with domain $X$. We say that $p$ is \emph{trivial} if it admits a continuous global section; otherwise, we say that $p$ is \emph{nontrivial}. A principal $\T$-bundle $p\colon B \to X$ is trivial if and only if there is a homeomorphism $\phi\colon X \times \T \to B$ that is \emph{$\T$-equivariant} in the sense that $w \cdot \phi(x,z) = \phi(x,wz)$ for all $x \in X$ and $w, z \in \T$, and is \emph{fibre-preserving} in the sense that $p\big(\phi(x,z)\big) = x$ for all $(x,z) \in X \times \T$.

Given a principal $\T$-bundle $p\colon B \to X$, we define the \emph{conjugate bundle} $\overline{p}\colon \overline{B} \to X$ as follows. Let $\overline{B}$ be the set $\{\overline{b} : b \in B\}$, endowed with the unique topology such that the map $b\mapsto \overline{b}$ is a homeomorphism. Define $\overline{p}\colon \overline{B} \to X$ by $\overline{p}(\overline{b}) \coloneqq p(b)$, and define a $\T$-action on $\overline{B}$ by $z \cdot \overline{b} \coloneqq \overline{\overline{z} \cdot b}$. Since $p\colon B \to X$ is a principal $\T$-bundle, $\overline{p}\colon \overline{B} \to X$ is as well.

A \emph{groupoid} $\GG$ is a small category in which every morphism has an inverse. We call $\Go = \{ \gamma\gamma^{-1} : \gamma \in \GG \}$ the \emph{unit space} of $\GG$. The maps $r\colon \gamma \mapsto \gamma\gamma^{-1}$ and $s\colon \gamma \mapsto \gamma^{-1}\gamma$ are called the \emph{range} and \emph{source} maps $\GG \to \Go$. We define $\Gc \coloneqq \{ (\alpha, \beta) \in \GG \times \GG : s(\alpha) = r(\beta) \}$; its elements are called \emph{composable pairs}. We call $\GG$ a \emph{Hausdorff groupoid} if it carries a locally compact Hausdorff topology such that $\Gc \ni (\alpha,\beta) \mapsto \alpha\beta^{-1} \in \GG$ is continuous. It is \emph{\'etale} if $r,s\colon \GG \to \Go$ are local homeomorphisms. It is \emph{minimal} if, for each $x \in \Go$, the orbit $r(s^{-1}(x))$ is dense in $\Go$. The \emph{isotropy} of $\GG$ is the subgroupoid
\[
\Iso(\GG) = \{ \gamma \in \GG : r(\gamma) = s(\gamma) \},
\]
which contains $\Go$. We write $\IG$ for the interior of $\Iso(\GG)$. If $\GG$ is \'etale, $\Go$ is open \cite[Lemma~8.4.2]{Sims2020}, and so $\Go \subseteq \IG$. We say that $\GG$ is \emph{effective} if $\IG = \Go$.

We use the definition of a twist given in \cite[Definition~3.1]{Boenicke2021} (see also \cite[Remark~2.6]{Armstrong2022}). A \emph{twist} $(\EE, \imath, \pi)$ over a Hausdorff \'etale groupoid $\GG$ is a sequence
\[
\Go \times \T \xrightarrow{\imath} \EE \xrightarrow{\pi} \GG
\]
such that $\EE$ is a Hausdorff groupoid, $\imath$ and $\pi$ are groupoid homomorphisms, $\imath$ is a homeomorphism onto $\pi^{-1}(\Go)$, $\pi$ is continuous, open, and surjective, and the extension is \emph{central}: $\imath(r(\ve),z) \ve = \ve \imath(s(\ve),z)$ for all $\ve \in \EE$ and $z \in \T$. We identify $\Go$ with $\Eo = \imath\big(\Go \times \{1\}\big)$. For $x \in \Go$ and $z \in \T$, we have $\pi(\imath(x,z)) = x$. Centrality implies that the actions $\T \curvearrowright \EE$ and $\EE \curvearrowleft \T$ such that $z \cdot \ve \coloneqq \imath(r(\ve),z) \ve$ and $\ve \cdot z \coloneqq \ve \imath(s(\ve),z)$ coincide. By \cite[Proposition~3.4]{Boenicke2021}, $\pi$ admits continuous local sections, so $\pi\colon \EE \to \GG$ is a principal $\T$-bundle. Twists $(\EE_1,\imath_1,\pi_1)$ and $(\EE_2,\imath_2,\pi_2)$ are \emph{isomorphic} if there is an isomorphism $\varphi\colon \EE_1 \to \EE_2$ such that $\varphi \circ \imath_1 = \imath_2$ and $\pi_2 \circ \varphi = \pi_1$.

A \emph{continuous $2$-cocycle} $\varsigma\colon \Gc \to \T$ is a continuous map satisfying the \emph{$2$-cocycle identity}:
\[
\varsigma(\alpha, \beta) \, \varsigma(\alpha\beta, \gamma) = \varsigma(\alpha, \beta\gamma) \, \varsigma(\beta, \gamma)\qquad\text{ whenever $(\alpha,\beta), (\beta,\gamma) \in \Gc$.}
\]
Every continuous $2$-cocycle $\varsigma\colon \Gc \to \T$ yields a twist $\EE_\varsigma$ over $\GG$ as follows. Let $\EE_\varsigma \coloneqq \GG \times \T$ with the product topology. Define operations by
\[
(\alpha,w)(\beta,z) \coloneqq \big(\alpha\beta, \, \varsigma(\alpha,\beta) zw\big) \qquad \text{and} \qquad (\alpha,w)^{-1} \coloneqq \big(\alpha^{-1}, \, \overline{\varsigma(\alpha, \alpha^{-1})} \overline{w}\big).
\]
Define $\imath_\varsigma\colon \Go \times \T \to \EE_\varsigma$ by $\imath_\varsigma(x,z) \coloneqq (x,z)$, and define $\pi_\varsigma\colon \EE_\varsigma \to \GG$ by $\pi_\varsigma(\gamma,z) \coloneqq \gamma$. Then $(\EE_\varsigma, \imath_\varsigma, \pi_\varsigma)$ is a twist over $\GG$.

A twist $(\EE,\imath,\pi)$ over $\GG$ is \emph{topologically trivial} if $\pi$ admits a continuous global section. Given a continuous global section $S\colon \GG \to \EE$, there is a continuous $2$-cocycle $\varsigma\colon \Gc \to \T$ such that $\imath(r(\alpha),\sigma(\alpha,\beta)) = S(\alpha)S(\beta)S(\alpha\beta)^{-1}$ for all $(\alpha,\beta) \in \Gc$, and the twist $(\EE_\varsigma, \imath_\varsigma, \pi_\varsigma)$ as defined above is isomorphic to $(\EE,\imath,\pi)$; we say that $\EE$ is \emph{induced by a $2$-cocycle}.

\section{Kumjian's example}
\label{section: Kumjian's example}

In this section, we present Kumjian's example of a twist that does not arise from a continuous $2$-cocycle (see \cite[Section~4]{Kumjian1986}). While, as discussed in the introduction, Kumjian's construction does not provide an example of a situation in which the uniqueness theorem of \cite{Armstrong2022} is applicable but a similar theorem for effective groupoids (such as \cite[Theorem~10.2.7]{Sims2020} or \cite[Corollary~4.9]{Renault1991}) would not suffice, it conveys the central idea of our construction later.

Kumjian's example is as follows. Let $p\colon B \to X$ be a nontrivial principal $\T$-bundle over a locally compact Hausdorff space $X$ (for example, the Hopf fibration), and let $\overline{p}\colon \overline{B} \to X$ be the conjugate bundle. Define
\[
B \starx{X} \overline{B} \coloneqq \{ (b, \overline{c}) \in B \times \overline{B} : p(b) = \overline{p}(\overline{c}) \},
\]
and let $(B \starx{X} \overline{B}) / \T$ be the quotient of $B \starx{X} \overline{B}$ by the equivalence relation given by $(z \cdot b, \overline{c}) \sim (b, z \cdot \overline{c})$ for all $b \in B$, $\overline{c} \in \overline{B}$, and $z \in \T$. Define $(\overline{B} \starx{X} B) / \T$ analogously.

Let $R_2$ be the full equivalence relation on the two-point set $\{0,1\}$, regarded as a discrete groupoid with two units; we identify the unit space of $R_2$ with $\{0,1\}$. So $R_2$ has two elements that are not units: the element $(0,1)$ with range $0$ and source $1$, and its inverse $(1,0)$ with range $1$ and source $0$. Regarding $X$ as a topological groupoid consisting entirely of units, let $G \coloneqq X \times R_2$, the product groupoid. So $G$ is a locally compact Hausdorff \'etale groupoid with unit space $G^{(0)} = X \times R_2^{(0)} = X \times \{0,1\}$, and remaining elements $X \times \{(0,1), (1,0)\}$, with range and source maps given by $r(x,a) = (x,r(a))$ and $s(x,a) = (x,s(a))$, and with multiplication given by $(x,a)(x,b) = (x,ab)$ whenever $(a,b) \in R_2^{(2)}$. Now consider the set
\[
E \coloneqq \left(X \times \T \times \{0\}\right) \sqcup \left(B \times \{(0,1)\}\right) \sqcup \left(\overline{B} \times \{(1,0)\}\right) \sqcup \left(X \times \T \times \{1\}\right).
\]
Let $E^{(0)} = X \times \{0,1\}$. We must define structure maps on $E$. For this, throughout what follows, we write $\gamma$ for an element of $G$, we write $x$ for an element of $X$, we write $w,z$ for elements of $\T$, we write $i$ for an element of $\{0,1\}$, and we write $b, c$ for elements of $B$, so $\overline{b}, \overline{c}$ are the corresponding elements of $\overline{B}$. We define $r,s\colon E \to E^{(0)}$ and $\cdot^{-1}\colon E \to E$ by
\begin{align*}
r(x,z,i) &= (x,i), \qquad & r(b,(0,1)) &= (p(b),0), \qquad & r(\overline{b}, (1,0)) &= (\overline{p}(\overline{b}), 1) = (p(b),1), \\
(x,z,i)^{-1} &= (x,\overline{z},i), \qquad & (b,(0,1))^{-1} &= (\overline{b},(1,0)), \qquad & (\overline{b},(1,0))^{-1} &= (b,(0,1)),
\end{align*}
and $s(\gamma) = r(\gamma^{-1})$. Let $\theta_l\colon (B \starx{X} \overline{B}) / \T \to \T$ and $\theta_r\colon (\overline{B} \starx{X} B) / \T \to \T$ be the maps $\theta_l([z \cdot b, \overline{b}]) \coloneqq z$ and $\theta_r([\overline{b}, z \cdot b]) \coloneqq z$, and define multiplication on $E$ by
\begin{gather*}
(x,w,i)(x,z,i) = (x, wz, i),\\
(p(b), w, 0)(b, (0,1)) = (w \cdot b, (0,1)) = (b, (0,1))(p(b), w, 1), \\
(\overline{p}(\overline{b}), w, 1)(\overline{b}, (1,0)) = (w \cdot \overline{b}, (1,0)) = (\overline{b}, (1,0))(\overline{p}(\overline{b}), w, 0),\\
(b, (0,1))(\overline{c}, (1,0)) = (p(b), \theta_l([b, \overline{c}]), 0), \quad \text{ and } \quad (\overline{c}, (1,0))(b, (0,1)) = (p(b), \theta_r([\overline{c}, b]), 1).
\end{gather*}
These operations make $E$ into a groupoid. Define $\imath\colon \T \times \{0,1\} \to E$ to be the inclusion map, and define $\pi\colon E \to G$ by
\[
\pi(x,z,i) = (x,i), \qquad \pi(b, (0,1)) = (p(b),(0,1)), \quad \text{ and } \quad \pi(\overline{b},(1,0)) = (\overline{p}(\overline{b}),(1,0)).
\]
Then we obtain a twist
\[
G^{(0)} \times \T \simeq X \times \T \times \{0,1\} \xrightarrow{\imath} E \xrightarrow{\pi} G.
\]
We claim that this twist does not arise from a continuous $\T$-valued $2$-cocycle on $G$. To see why, suppose otherwise. Then there is a continuous global section $\Sigma\colon G \to E$. Restriction of this section to the subspace $X \times \{(0,1)\}$ gives a section
\[
\Sigma\restr{X \times \{(0,1)\}}\colon X \times \{(0,1)\} \to B \times \{(0,1)\}.
\]
Let $\pi_1\colon B \times \{(1,0)\} \to B$ be the projection map, and define $i_1\colon X \to X \times \{(0,1)\}$ by $i_1(x) \coloneqq (x, (0,1))$. Then $\pi_1 \circ \Sigma \circ i_1$ is a continuous global section of the bundle $p\colon B \to X$. This contradicts the nontriviality of $p\colon B \to X$. So $E$ is not topologically trivial. However,
\[
\Iso(G) = \{ \gamma \in G : r(\gamma) = s(\gamma) \} = X \times \{0,1\} = G^{(0)},
\]
so the restriction of the twist to the interior of the isotropy is trivial.

\section{The proof of \texorpdfstring{\cref{theorem: main}}{Theorem A}}
\label{section: our twist example}

\subsection{Defining the twisted groupoid as a topological space}
\label{subsection: nontrivial T-bundle}

Let $p\colon B \to X$ be any nontrivial principal $\T$-bundle over a locally compact Hausdorff space $X$ (see \cref{definition: principal circle bundle}) such that there exists a minimal homeomorphism $\sigma\colon X \to X$. For example, take $\sigma$ to be an irrational rotation map on $X = \T^2$: by \cite[Theorem~6.22]{Morita2001}, since $\T^2$ has second integral cohomology group $\Z$, it admits a nontrivial principal $\T$-bundle $p\colon B \to \T^2$. For each $x \in X$, let $B_x$ be the fibre $p^{-1}(x)$.

Let $\F_2 = \langle a, a^{-1}, b, b^{-1} \rangle$ be the free group with two generators $a$ and $b$. For each $w \in \F_2$, we will define an associated principal $\T$-bundle $\pw\colon \Bw \to X$, and we will then define a groupoid structure on the disjoint union $\bigsqcup_{w \in \F_2} \Bw$.

Let $\alpha$ be the group action of $\F_2$ on $X$ that is defined on generators by
\[
\alpha_a = \sigma \quad \,\text{and}\, \quad \alpha_b = \id_X.
\]
Let $\ve$ be the identity in $\F_2$---the empty word. Define $\pb{\ve}\colon \Cb{\ve} \to X$ and $\pb{a}\colon \Cb{a} \to X$ each to be the trivial bundle $X \times \T \to X$ with $\T$-action $z \cdot (x,w) \coloneqq (x,zw)$. Let $\pb{a^{-1}}\colon \Cb{a^{-1}} \to X$ be the trivial bundle $X \times \T \to X$ with the conjugate $\T$-action $z \cdot (x,w) = (x, \overline{z} w)$. Let $\pb{b}\colon \Cb{b} \to X$ be the nontrivial bundle $p\colon B \to X$, and let $\pb{b^{-1}}\colon \Cb{b^{-1}} \to X$ be the conjugate bundle $\overline{\pb{b}}\colon \overline{\Cb{b}} \to X$.

\begin{notation} \label{notation: bar}
For $d \in \{b,b^{-1}\}$, if $c \in \Cd$, then $\overline{c}$ denotes the copy of $c$ in $\Cdinv$. And for $d \in \{ \ve, a, a^{-1} \}$, if $(x,z) \in \Cd$, then we define $\overline{(x,z)} \coloneqq (\alpha_d(x),\overline{z}) \in \Cdinv$. So for all $d \in \{ \ve, a, a^{-1}, b, b^{-1} \}$ and $c \in \Cd$, we have $\overline{\overline{c}} = c$, and
\begin{equation} \label{equation: pdinv c bar}
\pdinv(\overline{c}) = \alpha_d\big(\pd(c)\big).
\end{equation}
Moreover, the map $\Cd \ni c \mapsto \overline{c} \in \Cdinv$ is a homeomorphism.
\end{notation}

For $w \in \F_2 {\setminus} \{\ve\}$, let $\abs{w}$ denote the word length of $w$, and write $w = w_1 w_2 \dotsm w_{\abs{w}}$; we define $\abs{\ve} = 0$. For $\abs{w} \le 1$, we define $\Bw \coloneqq \Cw$. For $\abs{w} \ge 2$, let $\Bw$ be the quotient of
\begin{align*}
\Cw \coloneqq \big\{ (c_1, \dotsc, c_{\abs{w}}) :\ &c_i \in \Cb{w_i} \text{ for each } i \in \{1, \dotsc, \abs{w}\}, \,\text{ and} \\
& \pb{w_i}(c_i) = \alpha_{w_{i+1}}\big(\pb{w_{i+1}}(c_{i+1})\big) \text{ for each } i \in \{1, \dotsc, \abs{w} - 1\} \big\}
\end{align*}
by the equivalence relation
\[
(c_1, \dotsc, z \cdot c_i, \dotsc, c_{\abs{w}}) \sim (c_1, \dotsc, z \cdot c_j, \dotsc, c_{\abs{w}}), \quad \text{for all } z \in \T, \ i,j \in \{1,\dotsc, \abs{w}\}.
\]
We denote the equivalence class of $c = (c_1, \dotsc, c_{\abs{w}}) \in \Cw$ by $[c] = [c_1, \dotsc, c_{\abs{w}}] \in \Bw$. We give $\Cw \subseteq \prod_{i=1}^{\abs{w}} \Cb{w_i}$ the subspace topology, and $\Bw$ the quotient topology.

\begin{prop} \label{proposition: Bw principal T-bundle}
For each $w \in \F_2 {\setminus} \{\ve\}$, $\Bw$ is a locally compact Hausdorff space, and the map $\pw\colon \Bw \to X$ given by
\[
\pw\big([c_1, \dotsc, c_{\abs{w}}]\big) = \pb{w_{\abs{w}}}(c_{\abs{w}})
\]
defines a (locally trivial) principal $\T$-bundle over $X$ with $\T$-action given by
\[
z \cdot [c_1, \dotsc, c_{\abs{w}}] \coloneqq [c_1, \dotsc, z \cdot c_{\abs{w}}], \quad \text{for } z \in \T.
\]
Moreover, for each $[c] = [c_1, \dotsc, c_{\abs{w}}] \in \Bw$ and $i \in \{1, \dotsc, \abs{w} - 1\}$, we have
\begin{equation} \label{equation: p^(w_i)(c_i)}
\pb{w_i}(c_i) = \alpha_{w_{i+1} \dotsm w_{\abs{w}}}\big(\pb{w_{\abs{w}}}(c_{\abs{w}})\big) = \alpha_{w_{i+1} \dotsm w_{\abs{w}}}\big(\pw([c])\big).
\end{equation}
\end{prop}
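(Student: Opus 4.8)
The plan is to verify the claimed structure directly from the quotient construction, treating the cases $\abs{w}\le 1$ and $\abs{w}\ge 2$ separately. For $\abs{w}\le 1$, everything is immediate since $\Bw = \Cw$ is by definition one of the explicitly described (trivial or nontrivial) principal $\T$-bundles over $X$, and there are no intermediate coordinates, so \cref{equation: p^(w_i)(c_i)} is vacuous. The substance is the case $\abs{w}\ge 2$, so I would fix such a $w$ and work with $\Cw \subseteq \prod_{i=1}^{\abs{w}} \Cb{w_i}$ and its quotient $\Bw$ throughout.

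First I would record that $\Cw$ is locally compact Hausdorff: it is a closed subspace of the locally compact Hausdorff space $\prod_{i=1}^{\abs{w}} \Cb{w_i}$, being the common zero set of the continuous compatibility conditions $\pb{w_i}(c_i) = \alpha_{w_{i+1}}(\pb{w_{i+1}}(c_{i+1}))$ (here I use that each $\pb{w_i}$ is continuous and each $\alpha_{w_{i+1}}$ is a homeomorphism). Next I would identify the quotient more concretely: the map sending $(c_1,\dots,c_{\abs{w}})$ to $(\langle c_1, c_1'\rangle \cdot c_1', \dots, c_{\abs{w}})$ for a reference family --- or more cleanly, the observation that the diagonal $\T$-action $z\cdot(c_1,\dots,c_{\abs{w}}) = (z\cdot c_1,\dots,z\cdot c_{\abs{w}})$ is free and fibrewise transitive along the equivalence relation --- lets me describe $\Bw$ as a principal $\T$-bundle. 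Concretely, I would show that the last-coordinate projection induces a bijection between each fibre of the relation and $\T$, so that the assignment $[c_1,\dots,c_{\abs{w}}] \mapsto [c_1,\dots,z\cdot c_{\abs{w}}]$ is a well-defined free $\T$-action, and that $\pw$ is well-defined (independent of the representative, since the relation only rescales coordinates and $\pb{w_{\abs{w}}}$ is $\T$-invariant) and continuous (it factors through the continuous last-coordinate projection $\Cw\to\Cb{w_{\abs{w}}}$ followed by $\pb{w_{\abs{w}}}$). The compatibility identity \cref{equation: p^(w_i)(c_i)} then follows by a downward induction on $i$: the defining relation gives $\pb{w_i}(c_i) = \alpha_{w_{i+1}}(\pb{w_{i+1}}(c_{i+1}))$, and applying the inductive hypothesis to $c_{i+1}$ together with the composition rule $\alpha_{w_{i+1}}\circ\alpha_{w_{i+2}\dotsm w_{\abs{w}}} = \alpha_{w_{i+1}\dotsm w_{\abs{w}}}$ yields the claim; this is representative-independent because rescaling a coordinate by $z\in\T$ leaves all the $\pb{w_i}(c_i)$ unchanged.

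The main obstacle is establishing local triviality and, relatedly, that the quotient topology on $\Bw$ is Hausdorff and locally compact --- the quotient of a nice space by a relation need not be nice in general. For this I would build explicit continuous local sections. Given $x_0\in X$, choose continuous local sections $S_i$ of $\pb{w_i}$ near the appropriate points $\alpha_{w_{i+1}\dotsm w_{\abs{w}}}(x_0)$ (using \cref{condition: local triviality} for each of the five base bundles $\Cb{d}$), shrink their domains using continuity of the $\alpha$'s so that on a common neighbourhood $U^{x_0}$ of $x_0$ the tuple $\big(S_1(\alpha_{w_2\dotsm w_{\abs{w}}}(u)),\dots,S_{\abs{w}}(u)\big)$ lies in $\Cw$, and then check that $u\mapsto \big[S_1(\alpha_{w_2\dotsm w_{\abs{w}}}(u)),\dots,S_{\abs{w}}(u)\big]$ is a continuous local section of $\pw$. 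Having such sections, the induced maps $U^{x_0}\times\T\to \pw^{-1}(U^{x_0})$ are continuous bijections; I would verify they are open (hence homeomorphisms) by exhibiting continuous inverses using the inner-product map of \cref{remark: inner product continuous and surjective} applied in the last coordinate, after lifting through the quotient map $\Cw\to\Bw$ (which is open, being the quotient by a group action). Local triviality then gives \cref{condition: principality} and \cref{condition: local triviality} at once, and since $\Bw$ is locally homeomorphic to $U^{x_0}\times\T$ with $X$ locally compact Hausdorff, $\Bw$ inherits being locally compact Hausdorff. Fibre-preservation of the action is immediate from the formulas. This completes the verification that $\pw\colon\Bw\to X$ is a principal $\T$-bundle satisfying \cref{equation: p^(w_i)(c_i)}.
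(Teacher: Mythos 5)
Your overall strategy is sound and reaches the same conclusion, but it routes the topological work differently from the paper. The paper first establishes that $\Bw$ is locally compact Hausdorff by observing that the equivalence relation on $\Cw$ is the orbit relation of the compact group $K_{\abs{w}} = \{(z_1,\dotsc,z_{\abs{w}}) \in \T^{\abs{w}} : z_1\dotsm z_{\abs{w}} = 1\} \simeq \T^{\abs{w}-1}$ acting coordinatewise, and quoting Bourbaki's results on quotients by compact group actions; it then proves principality directly, exhibiting $\varphi(z,[c]) = (z\cdot[c],[c])$ as a homeomorphism whose inverse sends $([c],[c'])$ to $\bigl(\prod_{i=1}^{\abs{w}}\langle c_i,c'_i\rangle, [c']\bigr)$. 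You instead extract both Hausdorffness/local compactness and principality from the equivariant local trivialisations, which is a legitimate alternative, but two points in your sketch need tightening. First, ``locally homeomorphic to $U^{x_0}\times\T$'' does not by itself give Hausdorffness; you must also invoke the continuous projection $\pw$ to the Hausdorff base $X$ to separate points lying in different fibres (points in a common fibre are then separated inside the open Hausdorff set $\pw^{-1}(U^{x_0})$). Second, the ``inner-product map applied in the last coordinate'' is not well defined on the quotient: $\langle e_{\abs{w}}, c^u_{\abs{w}}\rangle$ changes when the representative $(e_1,\dotsc,e_{\abs{w}})$ is replaced by an equivalent one, so the continuous inverse of your trivialisation must use the full product $\prod_{i=1}^{\abs{w}}\langle e_i, c^u_i\rangle$, which is invariant under the relation --- this is exactly the device in the paper's principality argument. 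Relatedly, your claim that the last-coordinate projection is a bijection from each equivalence class to $\T$ is not literally true (each class is a $K_{\abs{w}}$-orbit, a torus of dimension $\abs{w}-1$); the correct statement, which the paper obtains from freeness of the $\T$-action on $\Cb{w_{\abs{w}}}$, is that the induced $\T$-action on $\Bw$ is free and fibrewise transitive. With these repairs your argument goes through; the remaining ingredients (closedness of $\Cw$ in the product, the downward induction for the displayed identity, and the assembly of local sections of $\pw$ from sections of the component bundles) match the paper's proof.
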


Before we prove \cref{proposition: Bw principal T-bundle}, we need the following technical lemma.

\begin{lemma} \label{lemma: Cw LCH}
For each $w \in \F_2$, $\Cw$ is a locally compact Hausdorff space.
\end{lemma}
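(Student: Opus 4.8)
The plan is to induct on the word length $\abs{w}$. For $\abs{w} \le 1$ the space $\Cw = \Cw$ is one of $X \times \T$, $B$, or $\overline{B}$, each of which is locally compact Hausdorff by hypothesis (recall $X$ is locally compact Hausdorff, $B$ is a principal $\T$-bundle over $X$, and $\overline{B}$ is homeomorphic to $B$), so the base case is immediate. For the inductive step, fix $w$ with $\abs{w} = n \ge 2$, write $w = w_1 w'$ where $w' = w_2 \dotsm w_n$ has length $n-1$, and observe that
\[
\Cw = \big\{ (c_1, c') \in \Cb{w_1} \times \Cb{w'} : \pb{w_1}(c_1) = \alpha_{w_2}\big(\pb{w_2}(c'_1)\big) \big\},
\]
where I am writing $c' = (c'_1, \dots, c'_{n-1})$ and using the description of the first-coordinate matching condition. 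Actually it is cleaner to build $\Cw$ directly as a fibre product: $\Cw$ is the subspace of the product $\prod_{i=1}^{n} \Cb{w_i}$ cut out by the $n-1$ matching conditions $\pb{w_i}(c_i) = \alpha_{w_{i+1}}(\pb{w_{i+1}}(c_{i+1}))$.

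The two things I need are: (i) each factor $\Cb{w_i}$ is locally compact Hausdorff — true by the base case — and (ii) $\Cw$ is a \emph{closed} subset of the (locally compact Hausdorff) product $\prod_{i=1}^n \Cb{w_i}$. For (ii), each matching condition is the preimage of the diagonal in $X \times X$ under the continuous map $(c_1, \dots, c_n) \mapsto \big(\pb{w_i}(c_i),\, \alpha_{w_{i+1}}(\pb{w_{i+1}}(c_{i+1}))\big)$; since $X$ is Hausdorff, the diagonal is closed, so each matching condition defines a closed subset, and $\Cw$ is the (finite) intersection of these, hence closed. A finite product of locally compact Hausdorff spaces is locally compact Hausdorff, and a closed subspace of a locally compact Hausdorff space is again locally compact Hausdorff; combining these gives the result. (Alternatively, one can phrase the inductive step via the single fibre product $\Cw \cong \Cb{w_1} \xstary{}{} \Cb{w'}$ over $X$ along $\pb{w_1}$ and $\alpha_{w_2} \circ \pb{w_2}$, and invoke that a fibre product of locally compact Hausdorff spaces over a Hausdorff space — realised as a closed subspace of the product — is locally compact Hausdorff; this makes the induction transparent, though the ``all at once'' version above avoids even needing the inductive hypothesis beyond the length-$\le 1$ case.)

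I do not anticipate a genuine obstacle here: the only mild point of care is making sure the matching conditions are expressed as preimages of the \emph{closed} diagonal (using Hausdorffness of $X$) rather than merely as equalisers, and remembering to invoke the two standard permanence properties — finite products and closed subspaces — for the class of locally compact Hausdorff spaces. One should also note in passing that $\Cw$ is nonempty (e.g. because the maps $\pb{w_i}$ are surjective and $\alpha_{w_{i+1}}$ are homeomorphisms, so the matching conditions can always be solved), though nonemptiness is not strictly part of the claimed statement.
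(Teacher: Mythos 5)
Your proposal is correct and matches the paper's argument: both realise $\Cw$ as a closed subspace of the locally compact Hausdorff product $\prod_{i=1}^{\abs{w}} \Cb{w_i}$ and invoke the standard permanence properties. The only cosmetic difference is that you verify closedness via preimages of the closed diagonal in $X \times X$ while the paper uses a net argument (and your initial inductive framing is, as you note yourself, unnecessary).
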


\begin{proof}
When $w = \ve$, we have $\Cw = X \times \T$, which is a locally compact Hausdorff space. If $w \ne \ve$, and $\big((c_1^i, \dotsc, c_{\abs{w}}^i)\big)_{i \in I}$ is a net in $\Cw$ that converges to $(c_1, \dotsc, c_{\abs{w}}) \in \prod_{j=1}^{\abs{w}} \Cb{w_j}$, then continuity of $\sigma$ and of each $\pb{w_j}$ ensure that $(c_1, \dots, c_{\abs{w}}) \in \Cw$. So $\Cw$ is a closed subspace of $\prod_{j=1}^{\abs{w}} \Cb{w_j}$ and hence is locally compact and Hausdorff.
\end{proof}

\begin{proof}[Proof of \cref{proposition: Bw principal T-bundle}]
Fix $w \in \F_2 {\setminus} \{\ve\}$. We begin by showing that $\Bw$ is a locally compact Hausdorff space. For each $n \in \N {\setminus} \{0\}$, define
\[
K_n \coloneqq \big\{ (z_1, \dotsc, z_n) \in \T^n : z_1 \dotsm z_n = 1 \big\} = \big\{ (z_1, \dotsc, z_{n-1}, \overline{z_1 \dotsm z_{n-1}}) : z_1, \dotsc, z_{n-1} \in \T \big\}.
\]
Then $K_n \simeq \T^{n-1}$ is compact for $n \in \N {\setminus} \{0\}$. Also, $K_{\abs{w}}$ is a group under coordinatewise multiplication, and acts coordinatewise on $\Cw$. For $c = (c_1, \dotsc, c_{\abs{w}}) \in \Cw$, \[
\Bw \ni [c] = \big\{ (z_1 \cdot c_1, \dotsc, z_{\abs{w}} \cdot c_{\abs{w}}) : (z_1, \dotsc, z_{\abs{w}}) \in K_{\abs{w}} \big\} = K_{\abs{w}} \cdot c.
\]
Thus $\Bw$ is the quotient of $\Cw$ by the action of $K_{\abs{w}}$. Since $\Cw$ is locally compact Hausdorff by \cref{lemma: Cw LCH} and $K_{\abs{w}}$ is compact, \cite[Proposition~2 and Corollary~1 of Section~III.4.1 and Proposition~3 of Section~III.4.2]{Bourbaki1995} imply that $\Bw$ is locally compact Hausdorff.

We now show that $\pw\colon \Bw \to X$ is a principal $\T$-bundle satisfying \cref{equation: p^(w_i)(c_i)}. Since $\pb{w_{\abs{w}}}\colon \Cb{w_{\abs{w}}} \to X$ is a continuous surjection, the map $\Cw \ni (c_1, \dotsc, c_{\abs{w}}) \mapsto \pb{w_{\abs{w}}}(c_{\abs{w}}) \in X$ is a continuous surjection, and it follows that it descends to a continuous surjection on the quotient $\Bw$ of $\Cw$.

Similarly, since the action of $\T$ on $\Cb{w_{\abs{w}}}$ is continuous and fibre-preserving, the formula $z \cdot [c_1, \dotsc, c_{\abs{w}}] \coloneqq [c_1, \dotsc, z \cdot c_{\abs{w}}]$ defines a continuous fibre-preserving action of $\T$ on $\Bw$.

For each $[c] = [c_1, \dotsc, c_{\abs{w}}] \in \Bw$ and $i \in \{1, \dotsc, \abs{w} - 1\}$, we have
\[
\pb{w_i}(c_i) = \alpha_{w_{i+1}}\big(\pb{w_{i+1}}(c_{i+1})\big)
\]
by the definition of $\Bw$, and \cref{equation: p^(w_i)(c_i)} then follows by the definition of $\pw$.

We show that $\pw\colon \Bw \to X$ is principal. Define $\varphi\colon \T \times \Bw \to \Bw \xstary{\pw}{\pw} \Bw$ by $\varphi(z,[c]) \coloneqq (z \cdot [c], [c])$. Then $\varphi$ is continuous because $\T$ acts continuously on $\Bw$. For surjectivity of $\varphi$, fix $[c] = [c_1, \dotsc, c_{\abs{w}}] \in \Bw$ and $[c'] = [c'_1, \dotsc, c'_{\abs{w}}] \in \Bw$ with $\pw([c]) = \pw([c'])$. By \cref{equation: p^(w_i)(c_i)}, each $\pb{w_i}(c_i) = \pb{w_i}(c'_i)$, so there is a unique element $\langle c_i, c_i' \rangle \in \T$ such that $\langle c_i, c_i' \rangle \cdot c_i' = c_i$. We have
\[
\textstyle \big(\prod_{i=1}^{\abs{w}} \langle c_i, c'_i \rangle \big) \cdot [c'] = \big[ \langle c_1, c'_1 \rangle \cdot c'_1, \dotsc, \langle c_{\abs{w}}, c'_{\abs{w}} \rangle \cdot c'_{\abs{w}} \big] = [c_1, \dotsc, c_{\abs{w}}] = [c],
\]
giving $\varphi\big(\prod_{i=1}^{\abs{w}} \langle c_i, c'_i \rangle, \, [c'] \big) = ([c], [c'])$, so $\varphi$ is surjective. To see that $\varphi$ is injective, suppose that $\varphi(z,[c]) = \varphi(z',[c'])$. Then $[c] = [c']$, and $z \cdot [c] = z' \cdot [c'] = z' \cdot [c]$. Thus $(c_1, \dotsc, z \cdot c_{\abs{w}}) \sim (c_1, \dotsc, z' \cdot c_{\abs{w}})$, and so $z \cdot c_{\abs{w}} = z' \cdot c_{\abs{w}}$. Since $\pb{w_{\abs{w}}}\colon \Cb{w_{\abs{w}}} \to X$ is a principal $\T$-bundle, $\T$ acts freely on $\Cb{w_\abs{w}}$. Hence $z = z'$, and so $\varphi$ is injective. Therefore, $\varphi$ is a continuous bijection with inverse given by
\[\textstyle
\varphi^{-1}([c],[c']) = \big(\prod_{i=1}^{\abs{w}} \langle c_i, c'_i \rangle, \, [c'] \big).
\]
Since $(c_i, c'_i) \mapsto \langle c_i, c'_i \rangle$ is continuous for each $i \in \{1, \dotsc, \abs{w}\}$ and multiplication in $\T$ is continuous, $\varphi^{-1}$ is continuous. So $\varphi$ is a homeomorphism, and $\pw\colon \Bw \to X$ is principal.

To see that $\pw\colon \Bw \to X$ is locally trivial, fix $x \in X$. For $i \in \{1, \dotsc, \abs{w}\}$, the principal $\T$-bundle $\pb{w_i}\colon \Cb{w_i} \to X$ is locally trivial, so there is an open neighbourhood $\nbhd{U}{x}{i} \subseteq X$ of $\alpha_{w_{i+1} \dotsm w_{\abs{w}}}(x)$ admitting a continuous local section $\locsec{S}{x}{w_i}\colon \nbhd{U}{x}{i} \to \Cb{w_i}$ for $\pb{w_i}$. Then $\pb{w_i} \circ \locsec{S}{x}{w_i} = \id_{\nbhd{U}{x}{i}}$ for each $i \in \{1, \dotsc, \abs{w}\}$. Define
\[
\textstyle U^x \coloneqq \bigcap_{i=1}^{\abs{w}} \alpha_{w_{i+1} \dotsm w_{\abs{w}}}^{-1}\big(\nbhd{U}{x}{i}\big).
\]
Then $U^x \subseteq X$ is an open neighbourhood of $x$ since $\alpha_{w'}$ is a homeomorphism for each $w' \in \F_2$. For each $u \in U^x$ and each $i \in \{1, \dotsc, \abs{w}\}$, define
\[
c^u_i \coloneqq \locsec{S}{x}{w_i}\big(\alpha_{w_{i+1} \dotsm w_{\abs{w}}}(u)\big),
\]
so that
\begin{equation} \label{equation: p^(w_i)(c^u_i)}
\pb{w_i}(c^u_i) = \alpha_{w_{i+1} \dotsm w_{\abs{w}}}(u) = \alpha_{w_{i+1}}\big(\pb{w_{i+1}}(c^u_{i+1})\big).
\end{equation}
Then $\big[c^u_1, \dotsc, c^u_{\abs{w}}\big] \in \Bw$, and we define $\locsec{S}{x}{w}\colon U^x \to \Bw$ by $\locsec{S}{x}{w}(u) \coloneqq \big[c^u_1, \dotsc, c^u_{\abs{w}}\big]$. All the maps involved in the definition of $\locsec{S}{x}{w}$ are continuous on $U^x$, so $\locsec{S}{x}{w}$ is continuous. Moreover, since $c^u_{\abs{w}} = \locsec{S}{x}{w_{\abs{w}}}(u)$, we have
\[
\pw\big(\locsec{S}{x}{w}(u)\big) = \pw\big(\big[c^u_1, \dotsc, c^u_{\abs{w}}\big]\big) = \pb{w_{\abs{w}}}\big(c^u_{\abs{w}}\big) = u,
\]
which proves that $\locsec{S}{x}{w}\colon U^x \to \Bw$ is a continuous local section of $\pw\colon \Bw \to X$. Therefore, $\pw\colon \Bw \to X$ is a (locally trivial) principal $\T$-bundle.
\end{proof}

Let
\begin{equation} \label{equation: topological space EE}
\textstyle \EE \coloneqq \bigsqcup_{w \in \F_2} \Bw = \big\{ (w, c) : w \in \F_2, \, c \in \Bw \big\},
\end{equation}
under the disjoint union topology. For each $w \in \F_2$, define $\Ew \coloneqq \{w\} \times \Bw$. We will give $\EE$ the structure of a topological groupoid.

\subsection{Defining multiplication on the twisted groupoid}
\label{section: multiplication}

In this section we define a continuous partially defined multiplication on the space $\EE$ defined in \cref{equation: topological space EE}, that is $\F_2$-graded in the sense that it carries $\Ew \,\xstary{s}{r}\, \Eb{w'}$ to $\Eb{ww'}$. The basic idea is that if there is cancellation in the product of $w$ and $w'$ in $\F_2$, say $w = uv$ and $w' = v^{-1}u'$, then we can eliminate the corresponding entries in $\Bw$ and $\Bb{w'}$ of composable pairs in $\Ew \times \Eb{w'}$.

For $d, e \in \{ \ve, a, a^{-1}, b, b^{-1} \}$ we describe a Baer-sum-like product $\Bb{d, e}$ of $\Bd$ and $\Bb{e}$, called their \emph{balanced fibred product}. Specifically, we define $\Bb{d,e}$ as the quotient of
\[
\Cb{d,e} \coloneqq \Bd \xstary{\pd}{\alpha_e \circ \pb{e}} \,\Bb{e} = \{ (c_1,c_2) \in \Bd \times \Bb{e} : \pd(c_1) = \alpha_e(\pb{e})(c_2) \}
\]
by the equivalence relation
\begin{equation} \label{equation: equivalence relation}
(c_1, z \cdot c_2) \sim (z \cdot c_1, c_2), \quad \text{for } z \in \T.
\end{equation}
We endow $\Cb{d,e}$ with the subspace topology inherited from $\Cd \times \Cb{e}$, and $\Bb{d,e}$ with the quotient topology. Note that if $d, e \ne \ve$ and $d \ne e^{-1}$, then $\Cb{d,e} = \Cb{de}$ and $\Bb{d,e} = \Bb{de}$. In \cref{lemma: reducing the products} we show that for all $d, e \in \{ \ve, a, a^{-1}, b, b^{-1} \}$, $\Bb{d,e}$ is homeomorphic to $\Bb{de}$.

We denote the equivalence class of $(c_1, c_2) \in \Cb{d,e}$ by $[c_1,c_2] \in \Bb{d,e}$. There is a continuous action of $\T$ on $\Bb{d,e}$ given by
\[
z \cdot [c_1, c_2] \coloneqq [z \cdot c_1, c_2] = [c_1, z \cdot c_2], \quad \text{for } z \in \T \text{ and } [c_1,c_2] \in \Bb{d,e}.
\]

\begin{lemma} \label{lemma: B d dinv}
Fix $d \in \{ \ve, a, a^{-1}, b, b^{-1} \}$. For $c \in \Bd$, let $\overline{c}$ be as in \cref{notation: bar}. Then
\begin{equation} \label{equation: canonical form d dinv}
\Bb{d,d^{-1}} = \big\{ z \cdot [c,\overline{c}] : z \in \T, \, c \in \Cd \big\}.
\end{equation}
\end{lemma}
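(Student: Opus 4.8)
The plan is to prove the two inclusions in \cref{equation: canonical form d dinv} directly. Throughout I would use that $\Bd = \Cd$ and $\Bb{d^{-1}} = \Cb{d^{-1}}$ — valid precisely because every $d \in \{\ve, a, a^{-1}, b, b^{-1}\}$ has word length at most one — together with \cref{equation: pdinv c bar} from \cref{notation: bar} and the fact that $\pb{d^{-1}}\colon \Cb{d^{-1}} \to X$ is a principal $\T$-bundle.

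For the inclusion ``$\supseteq$'', I would first check that $(c, \overline{c}) \in \Cb{d,d^{-1}}$ for every $c \in \Cd$. Indeed, \cref{equation: pdinv c bar} gives $\pb{d^{-1}}(\overline{c}) = \alpha_d(\pd(c))$, so $\alpha_{d^{-1}}\big(\pb{d^{-1}}(\overline{c})\big) = \alpha_{d^{-1}}\alpha_d\big(\pd(c)\big) = \pd(c)$, which is exactly the balanced-fibred-product condition defining $\Cb{d,d^{-1}}$. Hence $[c,\overline{c}] \in \Bb{d,d^{-1}}$, and since $\T$ acts on $\Bb{d,d^{-1}}$, also $z \cdot [c,\overline{c}] \in \Bb{d,d^{-1}}$ for all $z \in \T$.

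For the inclusion ``$\subseteq$'', take an arbitrary $[c_1, c_2] \in \Bb{d,d^{-1}}$ with $(c_1, c_2) \in \Cb{d,d^{-1}}$, so that $c_1 \in \Cd$, $c_2 \in \Cb{d^{-1}}$, and $\pd(c_1) = \alpha_{d^{-1}}\big(\pb{d^{-1}}(c_2)\big)$. Applying $\alpha_d$ and comparing with \cref{equation: pdinv c bar} yields $\pb{d^{-1}}(c_2) = \alpha_d\big(\pd(c_1)\big) = \pb{d^{-1}}(\overline{c_1})$, so $c_2$ and $\overline{c_1}$ lie in the same fibre of the principal $\T$-bundle $\pb{d^{-1}}$. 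Setting $z \coloneqq \langle c_2, \overline{c_1} \rangle$ then gives $c_2 = z \cdot \overline{c_1}$, whence $[c_1, c_2] = [c_1, z \cdot \overline{c_1}] = z \cdot [c_1, \overline{c_1}]$, which is of the required form since $c_1 \in \Cd$. Together the two inclusions give \cref{equation: canonical form d dinv}.

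I do not anticipate a serious obstacle here; it is essentially a bookkeeping argument. The two points needing care are to keep track of the identification $\Bd = \Cd$ (which holds only because $\abs{d} \le 1$, so that $\Cb{d,d^{-1}}$ really does consist of pairs of the type described in \cref{notation: bar}), and to invoke \cref{equation: pdinv c bar} in both directions — once to verify the fibre-compatibility needed for ``$\supseteq$'', and once to locate $c_2$ in the same fibre as $\overline{c_1}$ for ``$\subseteq$''.
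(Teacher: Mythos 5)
Your proposal is correct and follows essentially the same route as the paper's own proof: both directions use \cref{equation: pdinv c bar} to verify the balanced-fibred-product condition for $(c,\overline{c})$, and then principality of $\pb{d^{-1}}$ to write an arbitrary $c_2$ as $z \cdot \overline{c_1}$ and pull the scalar out of the class. The only cosmetic difference is that the paper checks $(z \cdot c, \overline{c}) \in \Cb{d,d^{-1}}$ directly rather than invoking the $\T$-action on $\Bb{d,d^{-1}}$; the content is identical.
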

\begin{proof}
For $c \in \Cd$ and $z \in \T$, we have $\pdinv(\overline{c}) = \alpha_d\big(\pd(c)\big)$ by \cref{equation: pdinv c bar}. So $\pd(z \cdot c) = \pd(c) = \alpha_{d^{-1}}\big(\pdinv(\overline{c})\big)$. Hence $z \cdot [c,\overline{c}] = [z \cdot c,\overline{c}] \in \Bb{d,d^{-1}}$. For the reverse containment, fix $[c_1,c_2] \in \Bb{d,d^{-1}}$. Then $\pd(c_1) = \alpha_{d^{-1}}\big(\pdinv(c_2)\big)$, so $\pdinv(c_2) = \alpha_d\big(\pd(c_1)\big) = \pdinv(\overline{c_1})$. Since $\pdinv\colon \Cdinv \to X$ is a principal $\T$-bundle, there is therefore a unique $z \in \T$ such that $c_2 = z \cdot \overline{c_1}$. Thus $[c_1,c_2] = [c_1, z \cdot \overline{c_1}] = z \cdot [c_1,\overline{c_1}]$, as required.
\end{proof}

We now show that $\Bb{\ve}$ acts as an identity under the balanced fibred product, in the sense that for each $d \in \{ \ve, a, a^{-1}, b, b^{-1} \}$, we have $\Bb{\ve,d} \simeq \Cd \simeq \Bb{d,\ve}$ and $\Bb{d,d^{-1}} \simeq \Cb{\ve}$.

\begin{lemma} \label{lemma: reducing the products}
\begin{enumerate}[label=(\alph*)]
\item \label{item: BFP identity} For each $d \in \{ \ve, a, a^{-1}, b, b^{-1} \}$, the maps
\begin{align*}
\psi_{\ve,d} \colon \Bb{\ve,d} \ni \left[ \big(\alpha_d(\pd(c)), z\big), c \right] &\mapsto z \cdot c \in \Cd = \Bd \quad \text{ and} \\
\psi_{d,\ve} \colon \Bb{d,\ve} \ni \left[ c, \big(\pd(c), z\big) \right] &\mapsto z \cdot c \in \Cd = \Bd
\end{align*}
are $\T$-equivariant homeomorphisms.
\item \label{item: BFP d dinv} For each $d \in \{ a, a^{-1}, b, b^{-1} \}$, the map
\[
\psi_{d,d^{-1}}\colon \Bb{d,d^{-1}} \ni z \cdot [c, \overline{c}] \mapsto z \cdot \big(\alpha_d(\pd(c)),1\big) = z \cdot \big(\pdinv(\overline{c}),1\big) \in \Cb{\ve} = \Bb{\ve}
\]
is a $\T$-equivariant homeomorphism.
\end{enumerate}
\end{lemma}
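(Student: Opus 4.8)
The plan is to check, in each case, that the stated map is well defined, is a continuous bijection with continuous inverse, and intertwines the $\T$-actions; the verification is almost entirely bookkeeping once the right normal forms are in hand. For part \ref{item: BFP identity}, I would first observe that every element of $\Bb{\ve,d}$ has a representative of the form $\big((\alpha_d(\pd(c)),z),c\big)$ for some $c \in \Cd$ and $z \in \T$: indeed, a general element is $[(x,w),c']$ with $x = \alpha_d(\pd(c'))$, and using the equivalence relation \cref{equation: equivalence relation} one moves the $\T$-coordinate onto the second entry, writing $[(x,w),c'] = [(x,1),w\cdot c']$, so $c = w \cdot c'$ and $z = 1$; more usefully, keeping $z$ free, $\psi_{\ve,d}$ as written sends $[(\alpha_d(\pd(c)),z),c] \mapsto z\cdot c$, and this is unambiguous because $[(\alpha_d(\pd(c)),zu),c] = [(\alpha_d(\pd(c)),z),u\cdot c]$ maps to $z\cdot(u\cdot c) = (zu)\cdot c$ as well. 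So $\psi_{\ve,d}$ is a well-defined map; it is continuous because it is induced on the quotient $\Bb{\ve,d}$ by the continuous map $\Cb{\ve,d}\ni\big((x,z),c\big)\mapsto z\cdot c$. An inverse is given by $\Cd\ni c \mapsto \big[(\alpha_d(\pd(c)),1),c\big]$, which is continuous since $\pd$, $\alpha_d$, and the quotient map are; one checks the two composites are the respective identities directly. Finally $\psi_{\ve,d}(z'\cdot[(\alpha_d(\pd(c)),z),c]) = \psi_{\ve,d}([(\alpha_d(\pd(c)),z'z),c]) = (z'z)\cdot c = z'\cdot\psi_{\ve,d}([(\alpha_d(\pd(c)),z),c])$, so $\psi_{\ve,d}$ is $\T$-equivariant. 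The argument for $\psi_{d,\ve}$ is identical, moving the $\T$-coordinate onto the first entry instead and using $\pd(z\cdot c)=\pd(c)$.

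For part \ref{item: BFP d dinv}, the key input is \cref{lemma: B d dinv}: every element of $\Bb{d,d^{-1}}$ can be written as $z\cdot[c,\overline{c}]$ for some $z\in\T$, $c\in\Cd$. The map $\psi_{d,d^{-1}}$ sends this to $z\cdot\big(\alpha_d(\pd(c)),1\big)\in\Cb{\ve}=X\times\T$; since $\pdinv(\overline{c})=\alpha_d(\pd(c))$ by \cref{equation: pdinv c bar}, the two expressions in the statement agree. I would first verify well-definedness: if $z\cdot[c,\overline{c}] = z'\cdot[c',\overline{c'}]$, then $[z\cdot c,\overline{c}] = [z'\cdot c',\overline{c'}]$ in $\Bb{d,d^{-1}}$, so there is $u\in\T$ with $z\cdot c = u\cdot(z'\cdot c')$ and $\overline{c} = \overline{u}\cdot\overline{c'}$; the second equation, unpacked via the definition of the conjugate action ($\overline{u}\cdot\overline{c'} = \overline{u\cdot c'}$ when $d\in\{b,b^{-1}\}$, and similarly for $d\in\{a,a^{-1}\}$ using $\overline{(x,w)}=(\alpha_d(x),\overline w)$), forces $c = u\cdot c'$, hence $\pd(c)=\pd(c')$ and $z = z'u^{-1}\cdot$(the $\T$-factor relating $c$ and $u\cdot c'$)$\ldots$ more cleanly: from $c=u\cdot c'$ and $z\cdot c = uz'\cdot c'$ and freeness of the $\T$-action on $\Cd$ we get $z = uz'$ is wrong; rather $z\cdot c = z\cdot(u\cdot c') = (zu)\cdot c'$ equals $(uz')\cdot c'$, so $zu = uz'$, i.e.\ $z=z'$. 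Then $z\cdot(\alpha_d(\pd(c)),1) = z'\cdot(\alpha_d(\pd(c')),1)$ since also $\pd(c)=\pd(c')$. So $\psi_{d,d^{-1}}$ is well defined. Continuity follows because $\psi_{d,d^{-1}}$ is induced by the continuous map $\Cb{d,d^{-1}}\ni(c_1,c_2)\mapsto\big(\alpha_d(\pd(c_1)),\langle c_2,\overline{c_1}\rangle\big)\in X\times\T$, using that $(b_1,b_2)\mapsto\langle b_1,b_2\rangle$ is continuous on fibred products (\cref{remark: inner product continuous and surjective}) together with continuity of $c_1\mapsto\overline{c_1}$ (\cref{notation: bar}). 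A continuous inverse is $X\times\T\ni(x,z)\mapsto z\cdot[S(\alpha_{d^{-1}}(x)),\overline{S(\alpha_{d^{-1}}(x))}]$ built from a continuous local section $S$ of $\pd$ — but since we only need a global continuous inverse and local sections need not glue, it is cleaner to instead define $\theta\colon X\times\T\to\Bb{d,d^{-1}}$ locally using local trivialisations of $\pd$ and check the local definitions agree on overlaps (they do, because the formula $z\cdot[c,\overline c]$ is independent of the choice of $c$ in its fibre, as just shown). Equivariance is immediate: $\psi_{d,d^{-1}}(z'\cdot(z\cdot[c,\overline c])) = \psi_{d,d^{-1}}((z'z)\cdot[c,\overline c]) = (z'z)\cdot(\alpha_d(\pd(c)),1) = z'\cdot\psi_{d,d^{-1}}(z\cdot[c,\overline c])$.

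The main obstacle I anticipate is not any single deep step but rather correctly managing the conjugate-bundle conventions: the $\T$-action on $\Cdinv$ and the map $c\mapsto\overline c$ are defined by different formulae for $d\in\{b,b^{-1}\}$ (where $\overline c$ lives in a genuinely different, conjugate, bundle and $z\cdot\overline c=\overline{\overline z\cdot c}$) versus $d\in\{a,a^{-1}\}$ (where $\overline{(x,w)}=(\alpha_d(x),\overline w)$), and the balanced fibred product in \cref{equation: equivalence relation} uses $\pd(c_1) = \alpha_e(\pb e)(c_2)$ rather than a naive equality of $\pd$-images — so the twist by $\alpha_d$ must be tracked through every identification. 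The cleanest way to keep this under control is to do all of part \ref{item: BFP d dinv} via the inner-product notation $\langle\cdot,\cdot\rangle$ and \cref{remark: inner product properties}, reducing the conjugate-action computations to the two identities $\overline{\langle b_1,b_2\rangle} = \langle\overline{b_1},\overline{b_2}\rangle$ and $\langle z\cdot b_1,b_2\rangle = z\langle b_1,b_2\rangle$ recorded there, which hold uniformly across the five cases. With that in place, establishing that $\psi_{d,d^{-1}}$ is a homeomorphism reduces to the continuity of $\langle\cdot,\cdot\rangle$ and of $c\mapsto\overline c$, both already available, plus the local-triviality argument for producing a continuous inverse.
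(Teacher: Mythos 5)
Your proposal is correct and follows essentially the same route as the paper's proof: descend the obvious maps to the quotients, use the normal form $z \cdot [c,\overline{c}]$ from \cref{lemma: B d dinv} together with the inner-product identities of \cref{remark: inner product properties,remark: inner product continuous and surjective} for continuity of $\psi_{d,d^{-1}}$, and invoke local triviality of $\pdinv$ for continuity of the inverse (the paper, like you, only needs continuity of $\psi_{d,d^{-1}}^{-1}$ locally at each point, so your worry about gluing local sections is resolved exactly as you suggest). The only cosmetic difference is that the paper verifies injectivity and surjectivity directly and then exhibits the inverse formula, whereas you build a two-sided inverse; the content is the same.
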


\begin{proof}
For~\cref{item: BFP identity}, we prove the statement about $\psi_{\ve,d}$; the proof of the statement about $\psi_{d,\ve}$ is similar. Fix $d \in \{ \ve, a, a^{-1}, b, b^{-1} \}$. For $\big((x,z),c\big) \in \Cb{\ve,d}$, we have $x = \pb{\ve}(x,z) = \alpha_d\big(\pd(c)\big)$, so
\[
\Cb{\ve,d} = \left\{ \big((x,z), c\big) : z \in \T, \, c \in \Cd, \, x = \alpha_d\big(\pd(c)\big) \right\}\!.
\]
We claim that the map
\[
\Psi_{\ve,d}\colon \Cb{\ve,d} \ni \left( \big(\alpha_d(\pd(c)), z\big), c \right) \mapsto z \cdot c \in \Cd
\]
respects the equivalence relation~\labelcref{equation: equivalence relation}. Indeed, for $u \in \T$,
\[
\Psi_{\ve,d}\left( \big(\alpha_d(\pd(c)), z\big), u \cdot c \right) = z \cdot (u \cdot c) = uz \cdot c = \Psi_{\ve,d}\left( u \cdot \big(\alpha_d(\pd(c)), z\big), c \right)\!.
\]
Thus $\Psi_{\ve,d}$ descends to a map
\[
\psi_{\ve,d}\colon \Bb{\ve,d} \ni \left[ \big(\alpha_d(\pd(c)), z\big), c \right] \mapsto z \cdot c \in \Cd.
\]
For all $u \in \T$ and $[(x,z),c] \in \Bb{\ve,d}$, we have
\[
\psi_{\ve,d}\big(u \cdot [(x,z),c]\big) = \psi_{\ve,d}([(x,uz),c]) = uz \cdot c = u \cdot (z \cdot c) = u \cdot \psi_{\ve,d}\big([(x,z),c]\big),
\]
so $\psi_{\ve,d}$ is $\T$-equivariant. To see that $\psi_{\ve,d}$ is a continuous surjection, note that $\Psi_{\ve,d}\colon \Cb{d,\ve} \to \Cd$ is continuous because $\T$ acts continuously on $\Cd$. Also, $\Psi_{\ve,d}$ is surjective because $\psi_{\ve,d}\left( (\alpha_d(\pd(c)), 1), c \right) = c$ for all $c \in \Cd$. Let $q_{\ve,d}\colon \Cb{\ve,d} \to \Bb{\ve,d}$ be the quotient map. Since $\psi_{\ve,d} \circ q_{\ve,d} = \Psi_{\ve,d}$, the map $\psi_{\ve,d}$ is a continuous surjection. For injectivity, suppose that
\[
\psi_{\ve,d}\big([(x,u),c]\big) = \psi_{\ve,d}\big([(y,z),e]\big)
\]
for some $[(x,u),c], [(y,z),e] \in \Bb{\ve,d}$. Then $u \cdot c = z \cdot e$, and so
\[
x = \alpha_d\big(\pd(u \cdot c)\big) = \alpha_d\big(\pd(z \cdot e)\big) = y.
\]
Hence
\[
[(x,u),c] = u \cdot [(x,1),c] = [(x,1), u \cdot c] = [(y,1), z \cdot e] = z \cdot [(y,1),e] = [(y,z),e],
\]
which proves that $\psi_{\ve,d}$ is injective. Thus $\psi_{\ve,d}$ is bijective with inverse given by
\[
\psi_{\ve,d}^{-1}(c) = \left[ \big(\alpha_d(\pd(c)), 1\big), c \right]\!.
\]
Since $q_{\ve,d}$, $\alpha_d$, and $\pd$ are continuous, so is $\psi_{\ve,d}^{-1}$, so $\psi_{\ve,d}$ is a $\T$-equivariant homeomorphism.

For part~\cref{item: BFP d dinv}, fix $d \in \{ a, a^{-1}, b, b^{-1} \}$, and consider the map
\[
\Psi_{d,d^{-1}}\colon \Cb{d,d^{-1}} \ni (c_1,\overline{c_2}) \mapsto \big( \pdinv(\overline{c_2}), \langle c_1, c_2 \rangle \big) \in \Cb{\ve}.
\]
For all $z \in \T$ and $(c_1,\overline{c_2}) \in \Cb{d,d^{-1}}$, we have $(z \cdot c_1,\overline{c_2}) \sim (c_1, z \cdot \overline{c_2}) = (c_1, \overline{\overline{z} \cdot c_2})$, and by \cref{remark: inner product properties}, $\langle z \cdot c_1, c_2 \rangle = z \langle c_1, c_2 \rangle = \langle c_1, \overline{z} \cdot c_2 \rangle$. Hence
\[
\Psi_{d,d^{-1}}(z \cdot c_1,\overline{c_2}) = \big( \pdinv(\overline{c_2}), \langle z \cdot c_1, c_2 \rangle \big) = \big( \pdinv(\overline{\overline{z} \cdot c_2}), \langle c_1, \overline{z} \cdot c_2 \rangle \big) = \Psi_{d,d^{-1}}(c_1,\overline{\overline{z} \cdot c_2}),
\]
and so $\Psi_{d,d^{-1}}$ is constant on equivalence classes. Thus $\Psi_{d,d^{-1}}$ descends to a map
\[
\psi_{d,d^{-1}}\colon \Bb{d,d^{-1}} \ni [c_1,\overline{c_2}] \mapsto \big( \pdinv(\overline{c_2}), \langle c_1, c_2 \rangle \big) \in \Cb{\ve}.
\]
Recall from \cref{lemma: B d dinv} that $\Bb{d,d^{-1}} = \big\{ z \cdot [c,\overline{c}] : z \in \T, \, c \in \Cd \big\}$, and recall from \cref{equation: pdinv c bar} that $\alpha_d\big(\pd(c)\big) = \pdinv(\overline{c})$ for all $c \in \Cd$. For all $z \in \T$ and $c \in \Cd$, we have $\langle z \cdot c, c \rangle = z$ by \cref{remark: inner product properties}, and it follows that
\[
\psi_{d,d^{-1}}\big(z \cdot [c,\overline{c}]\big) = \psi_{d,d^{-1}}\big([z \cdot c,\overline{c}]\big) = \big( \pdinv(\overline{c}), \langle z \cdot c, c \rangle \big) = z \cdot \big( \pdinv(\overline{c}), 1 \big) = z \cdot \psi_{d,d^{-1}}([c,\overline{c}]),
\]
so $\psi_{d,d^{-1}}$ is $\T$-equivariant. To see that $\psi_{d,d^{-1}}$ is a continuous surjection, first note that $\pdinv\colon \Cdinv \to X$ is a continuous surjection by definition. Hence \cref{remark: inner product continuous and surjective} implies that $\Psi_{d,d^{-1}}\colon \Cb{d,d^{-1}} \to \Cb{\ve}$ is a continuous surjection. Writing $q_{d,d^{-1}}\colon \Cb{d,d^{-1}} \to \Bb{d,d^{-1}}$ for the quotient map and using that $\psi_{d,d^{-1}} \circ q_{d,d^{-1}} = \Psi_{d,d^{-1}}$, it follows that $\psi_{d,d^{-1}}$ is a continuous surjection. To see that $\psi_{d,d^{-1}}$ is injective, suppose that
\[
\psi_{d,d^{-1}}\big(u \cdot [c,\overline{c}]\big) = \psi_{d,d^{-1}}\big(z \cdot [e,\overline{e}]\big),
\]
for some $u, z \in \T$ and $c, e \in \Cd$. Then $\big( \pdinv(\overline{c}), u \big) = \big( \pdinv(\overline{e}), z \big)$, so $u = z$, and since $\pdinv\colon \Cdinv \to X$ is a principal $\T$-bundle, there is a unique element $v \in \T$ such that $\overline{c} = v \cdot \overline{e}$. Hence $c = \overline{v \cdot \overline{e}} = \overline{v} \cdot e$, and so
\[
u \cdot [c,\overline{c}] = z \cdot [\overline{v} \cdot e, v \cdot \overline{e}] = z \overline{v} v \cdot [e,\overline{e}] = z \cdot [e,\overline{e}],
\]
which proves that $\psi_{d,d^{-1}}$ is injective. Thus $\psi_{d,d^{-1}}$ is bijective with inverse given by
\[
\psi_{d,d^{-1}}^{-1}\big(\pdinv(\overline{c}),z\big) = z \cdot [c,\overline{c}].
\]
For any $(x,z) \in \Cb{\ve}$, since $\pdinv$ is locally trivial it has a continuous local section defined on an open neighbourhood of $x$. So $\psi_{d,d^{-1}}^{-1}$ is continuous at $(x,z)$. Thus $\psi_{d,d^{-1}}$ is a $\T$-equivariant homeomorphism.
\end{proof}

\begin{notation} \label{notation: concatenation product}
For $d,e \in \{a, a^{-1}, b, b^{-1}\}$ with $d \ne e^{-1}$, we define $\psi_{d,e} \coloneqq \id_{\Bb{d,e}}$.
\end{notation}

We use the next lemma to see that the maps $\psi_{d,d'}$ determine an associative operation.

\begin{lemma} \label{lemma: associativity}
Fix $d, d' \in \{ \ve, a, a^{-1}, b, b^{-1} \}$. For all $e, e', e^\dagger \in \Cb{\ve}$, $c, c'' \in \Cd$, $c' \in \Cb{d'}$, and $c^\dagger \in \Cdinv$ such that the following operations are defined, we have
\begin{enumerate}[label=(\alph*)]
\item \label{item: associativity ve d ve} $\psi_{d,\ve}\big(\big[\psi_{\ve,d}([e,c]),e'\big]\big) = \psi_{\ve,d}\big(\big[e,\psi_{d,\ve}([c,e'])\big]\big)$;
\item \label{item: associativity d ve d'} $\psi_{d,d'}\big(\big[\psi_{d,\ve}([c,e']),c'\big]\big) = \psi_{d,d'}\big(\big[c,\psi_{\ve,d'}([e',c'])\big]\big)$;
\item \label{item: associativity ve d dinv} $\psi_{d,d^{-1}}\big(\big[\psi_{\ve,d}([e,c]),c^\dagger\big]\big) = \psi_{\ve,\ve}\big(\big[e,\psi_{d,d^{-1}}([c,c^\dagger])\big]\big)$;
\item \label{item: associativity d dinv ve} $\psi_{\ve,\ve}\big(\big[\psi_{d,d^{-1}}([c,c^\dagger]),e^\dagger\big]\big) = \psi_{d,d^{-1}}\big(\big[c,\psi_{d^{-1},\ve}([c^\dagger,e^\dagger])\big]\big)$; and
\item \label{item: associativity d dinv d} $\psi_{\ve,d}\big(\big[\psi_{d,d^{-1}}([c,c^\dagger]),c''\big]\big) = \psi_{d,\ve}\big(\big[c,\psi_{d^{-1},d}([c^\dagger,c''])\big]\big)$.
\end{enumerate}
\end{lemma}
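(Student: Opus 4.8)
The plan is to verify each of the five identities by a direct computation, substituting the explicit formulas for the maps $\psi_{d,d'}$ and checking that both sides reduce to the same element of the target. First I would collect those formulas. For $d,e \in \{a,a^{-1},b,b^{-1}\}$ with $d \ne e^{-1}$ we have $\psi_{d,e} = \id_{\Bb{d,e}}$ by \cref{notation: concatenation product}; the maps of \cref{item: BFP identity} satisfy $\psi_{\ve,d}\big(\big[(\alpha_d(\pd(c)),z),c\big]\big) = z \cdot c$ and $\psi_{d,\ve}\big(\big[c,(\pd(c),z)\big]\big) = z \cdot c$; and the map of \cref{item: BFP d dinv} satisfies $\psi_{d,d^{-1}}\big(v \cdot [c,\overline{c}]\big) = v \cdot \big(\pdinv(\overline{c}),1\big)$, where for $d = \ve$ one first checks that this formula coincides with the formula for $\psi_{\ve,\ve}$ coming from \cref{item: BFP identity}, so that $\psi_{\ve,\ve}$ is unambiguous. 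The key observation is that each of these maps merely merges the two $\T$-components of its argument by multiplication in $\T$ and records the correct point of $X$; since $\T$ is abelian, the order in which the merges are performed will not matter, and this is what makes every identity hold.

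To illustrate the mechanics, consider part~\cref{item: associativity ve d ve}. I would write $e = (x,z)$; then $[e,c] \in \Bb{\ve,d}$ forces $x = \alpha_d(\pd(c))$, and, since $\psi_{\ve,d}([e,c]) = z \cdot c$, the requirement $\big[\psi_{\ve,d}([e,c]),e'\big] \in \Bb{d,\ve}$ forces $e' = (\pd(c),z')$ for some $z' \in \T$. The left-hand side then equals $\psi_{d,\ve}\big(\big[z \cdot c,(\pd(c),z')\big]\big) = z'z \cdot c$, while the right-hand side equals $\psi_{\ve,d}\big(\big[e, z' \cdot c\big]\big) = zz' \cdot c$, and these agree because $\T$ is abelian. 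Part~\cref{item: associativity d ve d'} goes the same way: the base point of the $\Cb\ve$-entry $e'$ is forced by the fibre conditions, and after substitution the generic case $\psi_{d,d'} = \id_{\Bb{d,d'}}$ reduces directly to the defining equivalence relation of $\Bb{d,d'}$, while the remaining cases ($d = \ve$, $d' = \ve$, or $d' = d^{-1}$) use the corresponding special formulas.

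For parts~\cref{item: associativity ve d dinv,item: associativity d dinv ve,item: associativity d dinv d}, which involve $\psi_{d,d^{-1}}$, I would additionally use \cref{lemma: B d dinv} to put each element of $\Bb{d,d^{-1}}$ into the canonical form $v \cdot [c,\overline{c}]$; the fibre conditions then determine both the base points of the $\Cb\ve$-entries and the scalar $v$, and the evaluation of each side uses \cref{remark: inner product properties} (especially $\langle v \cdot c, c \rangle = v$) together with \cref{equation: pdinv c bar}. After these reductions every identity again collapses to an equality of products in $\T$. I expect the main obstacle to be bookkeeping rather than anything conceptual: one must juggle several fibre constraints in order to choose valid representatives, and, in the parts involving $\psi_{d,d^{-1}}$, one must keep straight which copy ($\Cd$ or $\Cdinv$) each entry lives in and how the bar operation of \cref{notation: bar} interacts with the $\T$-action---through the conjugate action on $\Cdinv$ and the $\alpha_d$-twist in \cref{equation: pdinv c bar}. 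Once the representatives are fixed, no further idea is needed.
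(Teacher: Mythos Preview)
Your proposal is correct and follows essentially the same approach as the paper: direct computation using the explicit formulas from \cref{lemma: reducing the products}, writing each $\Cb{\ve}$-entry as $(x,z)$ and reducing everything to equalities of products in $\T$. The only small difference is in part~\cref{item: associativity d ve d'}: you anticipate a case split on $d,d'$, but the paper avoids this entirely by observing that both sides are $\psi_{d,d'}$ applied to the same class (since $[z'\cdot c,c'] = [c,z'\cdot c']$ in $\Bb{d,d'}$), so the identity of $\psi_{d,d'}$ is irrelevant.
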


\begin{proof}
Fix $e, e', e^\dagger \in \Cb{\ve}$, $c, c'' \in \Cd$, $c' \in \Cb{d'}$, and $c^\dagger \in \Cdinv$ such that $(e,c) \in \Cb{\ve,d}$, $(c,e') \in \Cb{d,\ve}$, $(e',c') \in \Cb{\ve,d'}$, $(c,c^\dagger) \in \Cb{d,d^{-1}}$, $(c^\dagger,e^\dagger) \in \Cb{d^{-1},\ve}$, and $(c^\dagger,c'') \in \Cb{d^{-1},d}$. Then there exist $x, x', x^\dagger \in X$ and $z, z', z^\dagger \in \T$ such that $e = (x,z)$, $e' = (x',z')$, and $e^\dagger = (x^\dagger, z^\dagger)$. It then follows from \cref{lemma: reducing the products}\cref{item: BFP identity} that
\[
\psi_{\ve,d}([e,c]) = z \cdot c, \quad \psi_{d,\ve}([c,e']) = z' \cdot c, \quad \psi_{\ve,d'}([e',c']) = z' \cdot c', \quad \text{and } \quad \psi_{d^{-1},\ve}([c^\dagger,e^\dagger]) = z^\dagger \cdot c^\dagger.
\]

For part~\cref{item: associativity ve d ve}, we use \cref{lemma: reducing the products}\cref{item: BFP identity} to see that
\[
\psi_{d,\ve}\big(\big[\psi_{\ve,d}([e,c]),e'\big]\big) = \psi_{d,\ve}\big([z \cdot c, e']\big) = zz' \cdot c = \psi_{\ve,d}\big([e, z' \cdot c]\big) = \psi_{\ve,d}\big(\big[e,\psi_{d,\ve}([c,e'])\big]\big).
\]

For part~\cref{item: associativity d ve d'}, we use that $(z' \cdot c, c') \sim (c, z' \cdot c')$ to see that
\[
\psi_{d,d'}\big(\big[\psi_{d,\ve}([c,e']),c'\big]\big) = \psi_{d,d'}\big(\big[z' \cdot c, c'\big]\big) = \psi_{d,d'}\big(\big[c, z' \cdot c'\big]\big) = \psi_{d,d'}\big(\big[c,\psi_{\ve,d'}([e',c'])\big]\big).
\]

For part~\cref{item: associativity ve d dinv}, we use \cref{lemma: reducing the products}\cref{item: BFP identity} to see that
\[
\psi_{d,d^{-1}}\big(\big[\psi_{\ve,d}([e,c]),c^\dagger\big]\big) = \psi_{d,d^{-1}}\big([z \cdot c, c^\dagger]\big) = z \cdot \psi_{d,d^{-1}}\big([c, c^\dagger]\big) = \psi_{\ve,\ve}\big(\big[e,\psi_{d,d^{-1}}([c,c^\dagger])\big]\big).
\]

For part~\cref{item: associativity d dinv ve}, we have
\[
\psi_{\ve,\ve}\big(\big[\psi_{d,d^{-1}}([c,c^\dagger]),e^\dagger\big]\big) = z^\dagger \cdot \psi_{d,d^{-1}}([c,c^\dagger]) = \psi_{d,d^{-1}}\big(\big[c,z^\dagger \cdot c^\dagger\big]\big) = \psi_{d,d^{-1}}\big(\big[c,\psi_{d^{-1},\ve}([c^\dagger,e^\dagger])\big]\big).
\]

For part~\cref{item: associativity d dinv d}, note that since $(c,e') \in \Cb{d,\ve}$, we have $\pd(c) = \pb{\ve}(e') = x'$, and since $(e,c) \in \Cb{\ve,d}$, $(c,c^\dagger) \in \Cb{d,d^{-1}}$, and $(c^\dagger,c'') \in \Cb{d^{-1},d}$, we have
\[
x = \pb{\ve}(e) = \alpha_d\big(\pd(c)\big) = \alpha_d\big(\alpha_{d^{-1}}\big(\pdinv(c^\dagger)\big)\big) = \pdinv(c^\dagger) = \alpha_d\big(\pd(c'')\big).
\]
So $\pd(c'') = \pd(c) = x'$, and by \cref{equation: pdinv c bar}, $\pdinv(\overline{c}) = \alpha_d\big(\pd(c)\big) = \pdinv(c^\dagger) = x$. Since $\pd\colon \Cd \to X$ and $\pdinv\colon \Cdinv \to X$ are principal $\T$-bundles, there are unique elements $u, u^\dagger \in \T$ such that $c'' = u \cdot c$ and $c^\dagger = u^\dagger \cdot \overline{c}$. So \cref{lemma: reducing the products}\cref{item: BFP d dinv} yields
\[
\psi_{d,d^{-1}}([c,c^\dagger]) = (x,u^\dagger) \quad \text{ and } \quad \psi_{d^{-1},d}([c^\dagger,c'']) = (x',u^\dagger u).
\]
It then follows by \cref{lemma: reducing the products}\cref{item: BFP identity} that
\begin{align*}
\psi_{\ve,d}\big(\big[\psi_{d,d^{-1}}([c,c^\dagger]),c''\big]\big) &= \psi_{\ve,d}\big([(x,u^\dagger),u \cdot c]\big) = u^\dagger u \cdot c \\
&= \psi_{d,\ve}\big(\big[c,(x',u^\dagger u)\big]\big) = \psi_{d,\ve}\big(\big[c,\psi_{d^{-1},d}([c^\dagger,c''])\big]\big). \qedhere
\end{align*}
\end{proof}

We now extend the definition of the balanced fibred product to obtain an operation
\[
\big( \Bw, \Bb{w'} \big) \mapsto \Bb{w,w'}
\]
for $w, w' \in \F_2$. For each $w, w' \in \F_2$, let $\Bb{w,w'}$ be the quotient of the set
\[
\Cb{w,w'} \coloneqq \Bw \xstary{\pb{w}}{\alpha_{w'} \circ \pb{w'}} \,\Bb{w'} = \big\{ (c_1,c_2) \in \Bw \times \Bb{w'} : \pw(c_1) = \alpha_{w'}(\pb{w'})(c_2) \big\}
\]
by the equivalence relation
\[
(c_1, z \cdot c_2) \sim (z \cdot c_1, c_2), \quad \text{for } z \in \T.
\]
We endow $\Cb{w,w'}$ with the subspace topology inherited from $\Bw \times \Bb{w'}$, and $\Bb{w,w'}$ with the quotient topology. We denote the equivalence class of $(c_1,c_2) \in \Cb{w,w'}$ by $[c_1,c_2] \in \Bb{w,w'}$. There is a continuous action of $\T$ on $\Bb{w,w'}$ given by
\[
z \cdot [c_1, c_2] \coloneqq [z \cdot c_1, c_2] = [c_1, z \cdot c_2], \quad \text{for } z \in \T \text{ and } [c_1,c_2] \in \Bb{w,w'}.
\]

It follows from \cref{lemma: reducing the products,lemma: associativity} and a long but standard induction argument on $\abs{w} + \abs{w'}$ that for all $w, w' \in \F_2$, there is a unique $\T$-equivariant homeomorphism
\[
\psi_{w,w'}\colon \Bb{w,w'} \to \Bb{ww'}
\]
that coincides with the $\T$-equivariant homeomorphisms defined in \cref{lemma: reducing the products,notation: concatenation product} when $\abs{w}, \abs{w'} \le 1$, and satisfies the following two properties for all $w, w', w'' \in \F_2$:
\begin{enumerate}[label=(\roman*)]
\item for all $[c,c'] \in \Bb{w,w'}$,
\begin{equation} \label{equation: right-fibre-preserving multiplication}
\pb{ww'}\big(\psi_{w,w'}([c,c'])\big) = \pb{w'}(c'); \quad \text{ and}
\end{equation}
\item for all $c \in \Bw$, $c' \in \Bb{w'}$, and $c'' \in \Bb{w''}$ with $[c,c'] \in \Bb{w,w'}$ and $[c',c''] \in \Bb{w',w''}$,
\begin{equation} \label{equation: extended associativity}
\psi_{ww',w''}\big(\big[\psi_{w,w'}([c,c']),c''\big]\big) = \psi_{w,w'w''}\big(\big[c,\psi_{w',w''}([c',c''])\big]\big).
\end{equation}
\end{enumerate}

With $\EE$ as in \cref{equation: topological space EE}, we define the set of composable pairs to be the space
\[
\Ec \coloneqq \big\{ \big((w,c), (w',c')\big) \in \EE \times \EE : (c,c') \in \Cb{w,w'} \big\} \subseteq \EE \times \EE,
\]
under the subspace topology. We define multiplication on $\EE$ by
\[
\Ec \ni \big((w,c), (w',c')\big) \mapsto \big(ww', \psi_{w,w'}([c,c'])\big) \in \EE.
\]
\Cref{equation: extended associativity} implies that this multiplication is associative. It is continuous because each $\psi_{w,w'}$ is a homeomorphism onto the clopen set $\Bb{ww'}$.

\subsection{Defining inversion on the twisted groupoid}

In this section we define a continuous inversion map on $\EE$. Recall from \cref{notation: bar} the definition of the involutive homeomorphism $\Bw \ni c \mapsto \overline{c} \in \Bwinv$ for $w \in \F_2$ with $\abs{w} = 1$. We first extend this map so that it is defined for all $w \in \F_2$ and $c \in \Bw$.

Fix $w = w_1 \dotsb w_{\abs{w}} \in \F_2$, and fix $c = [c_1, \dotsc, c_{\abs{w}}] \in \Bw$ such that $c_i \in \Bb{w_i}$ for each $i \in \{1, \dotsc, \abs{w}\}$. By the definition of $\Bw$ and by \cref{equation: pdinv c bar}, we have
\[
\pb{w_{i+1}^{-1}}(\overline{c_{i+1}}) = \alpha_{w_{i+1}}\big(\pb{w_{i+1}}(c_{i+1})\big) = \pb{w_i}(c_i) = \alpha_{w_i^{-1}}\big(\pb{w_i^{-1}}(\overline{c_i})\big),
\]
and so $(\overline{c_{i+1}}, \overline{c_i}) \in \Bb{w_{i+1}^{-1}, w_i^{-1}}$, for each $i \in \{1, \dotsc, \abs{w}-1\}$. Thus
\[
\overline{c} \coloneqq \big[\overline{c_{\abs{w}}}, \dotsc, \overline{c_1}\big] \in \Bwinv.
\]
For all $z \in \T$,
\[
\overline{z \cdot c} = \big[\overline{z \cdot c_{\abs{w}}}, \dotsc, \overline{c_1}\big] = \big[\overline{z} \cdot \overline{c_{\abs{w}}}, \dotsc, \overline{c_1}\big] = \overline{z} \cdot \overline{c}.
\]
Moreover, $\overline{\overline{c}} = c$, so $\Bw \ni c \mapsto \overline{c} \in \Bwinv$ is a $\T$-contravariant involutive homeomorphism.

\begin{lemma} \label{lemma: c and c bar composable}
For all $w \in \F_2$ and $c \in \Bw$, we have $(c,\overline{c}) \in \Cb{w,w^{-1}}$ and $(\overline{c},c) \in \Cb{w^{-1},w}$, and $\psi_{w,w^{-1}}([c,\overline{c}]) = \big(\pwinv(\overline{c}),1\big)$ and $\psi_{w^{-1},w}([\overline{c},c]) = \big(\pw(c),1\big)$.
\end{lemma}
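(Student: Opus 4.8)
The plan is to induct on the word length $\abs{w}$. First I would observe that the two assertions about $(\overline{c},c)$ and $\psi_{w^{-1},w}$ are redundant: applying the assertions about $(c,\overline{c})$ and $\psi_{w,w^{-1}}$ with $w$ replaced by $w^{-1}$ and $c$ replaced by $\overline{c}\in\Bwinv$, and using $(w^{-1})^{-1}=w$ together with $\overline{\overline{c}}=c$, yields exactly the statements for $(\overline{c},c)$ and $\psi_{w^{-1},w}$. So it suffices to show, for all $w\in\F_2$ and $c\in\Bw$, that $(c,\overline{c})\in\Cb{w,w^{-1}}$ and $\psi_{w,w^{-1}}([c,\overline{c}])=(\pwinv(\overline{c}),1)$.

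The composability $(c,\overline{c})\in\Cb{w,w^{-1}}$ is a direct check for every $w$: the cases $\abs{w}\le1$ are immediate (for $\abs{w}=1$ it is part of the proof of \cref{lemma: B d dinv}), and for $\abs{w}\ge2$, writing $c=[c_1,\dots,c_{\abs{w}}]$ so that $\overline{c}=[\overline{c_{\abs{w}}},\dots,\overline{c_1}]$, one combines \eqref{equation: p^(w_i)(c_i)} and \eqref{equation: pdinv c bar} to get $\alpha_{w^{-1}}\big(\pb{w^{-1}}(\overline{c})\big)=\alpha_{w^{-1}}\big(\pb{w_1^{-1}}(\overline{c_1})\big)=\alpha_{w^{-1}}\big(\alpha_w(\pw(c))\big)=\pw(c)$, which is the defining condition of $\Cb{w,w^{-1}}$. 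For the value of $\psi_{w,w^{-1}}$, the base cases come straight from \cref{lemma: reducing the products}: if $\abs{w}=1$, say $w=d$, then \cref{lemma: reducing the products}\cref{item: BFP d dinv} with $z=1$ gives $\psi_{d,d^{-1}}([c,\overline{c}])=(\pdinv(\overline{c}),1)$; and if $w=\ve$, writing $c=(x,u)\in\Cb{\ve}$ so $\overline{c}=(x,\overline{u})$, \cref{lemma: reducing the products}\cref{item: BFP identity} gives $\psi_{\ve,\ve}([c,\overline{c}])=u\cdot(x,\overline{u})=(x,1)=(\pb{\ve}(\overline{c}),1)$.

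For the inductive step I would take $\abs{w}\ge2$, write $w=dv$ with $d\in\{a^{\pm1},b^{\pm1}\}$ and $1\le\abs{v}=\abs{w}-1$, so $w^{-1}=v^{-1}d^{-1}$. Since $dv$ and $v^{-1}d^{-1}$ are reduced words, $\psi_{d,v}$ and $\psi_{v^{-1},d^{-1}}$ are the canonical concatenation identifications $\Bb{d,v}=\Bw$ and $\Bb{v^{-1},d^{-1}}=\Bwinv$; so, writing $c=[c_1,\dots,c_{\abs{w}}]$ and $c'\coloneqq[c_2,\dots,c_{\abs{w}}]\in\Bb{v}$, we have $c=\psi_{d,v}([c_1,c'])$ and, straight from the definition of the bar map, $\overline{c}=\psi_{v^{-1},d^{-1}}([\overline{c'},\overline{c_1}])$. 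Two applications of the extended associativity relation \eqref{equation: extended associativity} then give
\begin{align*}
\psi_{w,w^{-1}}([c,\overline{c}])
&=\psi_{d,d^{-1}}\Big(\big[c_1,\ \psi_{v,v^{-1}d^{-1}}([c',\overline{c}])\big]\Big),\\
\psi_{v,v^{-1}d^{-1}}([c',\overline{c}])
&=\psi_{\ve,d^{-1}}\Big(\big[\psi_{v,v^{-1}}([c',\overline{c'}]),\ \overline{c_1}\big]\Big).
\end{align*}
By the inductive hypothesis $\psi_{v,v^{-1}}([c',\overline{c'}])=(\pb{v^{-1}}(\overline{c'}),1)$, and since $(\overline{c'},\overline{c_1})\in\Cb{v^{-1},d^{-1}}$ we have $\pb{v^{-1}}(\overline{c'})=\alpha_{d^{-1}}\big(\pdinv(\overline{c_1})\big)$, so \cref{lemma: reducing the products}\cref{item: BFP identity} collapses the inner term to $1\cdot\overline{c_1}=\overline{c_1}$. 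Hence $\psi_{w,w^{-1}}([c,\overline{c}])=\psi_{d,d^{-1}}([c_1,\overline{c_1}])$, which equals $(\pdinv(\overline{c_1}),1)$ by the $\abs{w}=1$ case; and by \eqref{equation: right-fibre-preserving multiplication} applied to $\overline{c}=\psi_{v^{-1},d^{-1}}([\overline{c'},\overline{c_1}])$ we get $\pdinv(\overline{c_1})=\pb{w^{-1}}(\overline{c})=\pwinv(\overline{c})$, completing the induction.

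I expect the main obstacle to be not this algebra but the structural facts underpinning it: that $\psi_{d,v}$ and $\psi_{v^{-1},d^{-1}}$ genuinely are the concatenation maps and that the bar map satisfies $\overline{\psi_{d,v}([c_1,c'])}=\psi_{v^{-1},d^{-1}}([\overline{c'},\overline{c_1}])$, both of which have to be extracted from the explicit inductive construction of the family $(\psi_{w,w'})_{w,w'\in\F_2}$ in \cref{section: multiplication}; together with the routine but fiddly verification that each bracketed pair occurring above lies in the appropriate balanced fibred product, so that \eqref{equation: extended associativity} and \cref{lemma: reducing the products} apply.
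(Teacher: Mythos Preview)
Your proposal is correct and follows essentially the same strategy as the paper: both argue composability via \eqref{equation: p^(w_i)(c_i)} and \eqref{equation: pdinv c bar}, then prove the $\psi$-identity by induction on $\abs{w}$, using two applications of \eqref{equation: extended associativity} to peel off one letter and invoke \cref{lemma: reducing the products}. The only cosmetic differences are that the paper peels off the \emph{last} letter ($w=w'w_{n+1}$) and directly proves $\psi_{w^{-1},w}([\overline{c},c])=(\pw(c),1)$ before remarking that the other identity is analogous, whereas you peel off the \emph{first} letter and appeal to the $w\leftrightarrow w^{-1}$ symmetry up front; your explicit flagging of the needed fact that $\psi_{d,v}$ and $\psi_{v^{-1},d^{-1}}$ are the concatenation maps is something the paper absorbs into its ``long but standard induction'' for the family $(\psi_{w,w'})$.
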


\begin{proof}
Fix $w = w_1 \dotsb w_{\abs{w}} \in \F_2$, and fix $c = [c_1, \dotsc, c_{\abs{w}}] \in \Bw$ such that $c_i \in \Bb{w_i}$ for each $i \in \{1, \dotsc, \abs{w}\}$. By definition of the principal $\T$-bundle $\pwinv\colon \Bwinv \to X$ of \cref{proposition: Bw principal T-bundle}, we have $\pwinv(\overline{c}) = \pb{w_1^{-1}}(\overline{c_1})$, because $\overline{c} = \big[\overline{c_{\abs{w}}}, \dotsc, \overline{c_1}\big]$ and $\overline{c_1} \in \Bb{w_1^{-1}}$. Using \cref{equation: p^(w_i)(c_i)} for the second equality and \cref{equation: pdinv c bar} for the third equality, we see that
\begin{equation} \label{equation: alpha_w pw c is pwinv c bar}
\alpha_w\big(\pw(c)\big) = \alpha_{w_1}\big(\alpha_{w_2 \dotsb w_{\abs{w}}}\big(\pw(c)\big)\big) = \alpha_{w_1}\big(\pb{w_1}(c_1)\big) = \pb{w_1^{-1}}(\overline{c_1}) = \pwinv(\overline{c}),
\end{equation}
and so $\pw(c) = \alpha_{w^{-1}}\big(\pwinv(\overline{c})\big)$. Thus $(c,\overline{c}) \in \Cb{w,w^{-1}}$ and $(\overline{c},c) \in \Cb{w^{-1},w}$.

We show that $\psi_{w^{-1},w}([\overline{c},c]) = \big(\pw(c),1\big)$ by induction on $\abs{w}$. If $\abs{w} \le 1$, then the claim holds by \cref{lemma: reducing the products}. Suppose that the claim holds for $w \in \F_2$ with $\abs{w} \le n$. Fix $w \in \F_2$ with $\abs{w} = n+1$ and $c \in \Bw$. Let $w' \coloneqq w_1 \dotsb w_n$. Since $\psi_{w',w_{n+1}}\colon \Bb{w',w_{n+1}} \to \Bw$ is a bijection, there exist $c' = [c_1, \dotsc, c_n] \in \Bb{w'}$ and $c_{n+1} \in \Bb{w_{n+1}}$ such that $c = \psi_{w',w_{n+1}}([c',c_{n+1}])$. By the inductive hypothesis, since $\abs{w'} = n$ and $(c_n,c_{n+1}) \in \Cb{w_n,w_{n+1}}$,
\[
\psi_{(w')^{-1},w'}([\overline{c'},c']) = \big(\pb{w'}(c'),1\big) = \big(\pb{w_n}(c_n),1\big) = \big(\alpha_{w_{n+1}}(\pb{w_{n+1}}(c_{n+1})),1\big) \in \Bb{\ve}.
\]
Thus, \cref{lemma: reducing the products}\cref{item: BFP identity} and \cref{equation: extended associativity} yield
\begin{align*}
\psi_{(w')^{-1},w}([\overline{c'},c]) &= \psi_{(w')^{-1},w'w_{n+1}}\big(\big[\overline{c'},c\big]\big) = \psi_{(w')^{-1},w'w_{n+1}}\big(\big[\overline{c'},\psi_{w',w_{n+1}}([c',c_{n+1}])\big]\big) \\
&= \psi_{(w')^{-1}w',w_{n+1}}\big(\big[\psi_{(w')^{-1},w'}([\overline{c'},c']),c_{n+1}\big]\big) \\
&= \psi_{\ve,w_{n+1}}\big(\big[\big(\alpha_{w_{n+1}}(\pb{w_{n+1}}(c_{n+1})),1\big), c_{n+1}\big]\big) = c_{n+1} \in \Bb{w_{n+1}},
\end{align*}
and hence
\begin{align*}
\psi_{w^{-1},w}([\overline{c},c]) &= \psi_{w_{n+1}^{-1},(w')^{-1}w}\big(\big[\overline{c_{n+1}},\psi_{(w')^{-1},w}([\overline{c'},c])\big]\big) \\
&= \psi_{w_{n+1}^{-1},w_{n+1}}([\overline{c_{n+1}},c_{n+1}]) = \big(\pb{w_{n+1}}(c_{n+1}),1\big) = \big(\pw(c),1\big),
\end{align*}
completing the induction. An analogous argument gives $\psi_{w,w^{-1}}([c,\overline{c}]) = \big(\pwinv(\overline{c}),1\big)$.
\end{proof}

For each $(w,c) \in \EE = \bigsqcup_{w \in \F_2} \Bw$, we define
\[
(w,c)^{-1} \coloneq (w^{-1},\overline{c}).
\]

\begin{prop} \label{proposition: EE groupoid}
The set $\EE = \bigsqcup_{w \in \F_2} \Bw$ is a locally compact Hausdorff groupoid under the multiplication
\[
\Ec \ni \big((w,c), (w',c')\big) \mapsto \big(ww', \psi_{w,w'}([c,c'])\big) \in \EE
\]
and the inversion $\EE \ni (w,c) \mapsto (w^{-1},\overline{c}) \in \EE$.
\end{prop}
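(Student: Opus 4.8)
The plan is to verify in turn the algebraic groupoid axioms, read off the range and source maps from the inversion, and then check the topological conditions. Local compactness and Hausdorffness need no real work: $\EE = \bigsqcup_{w \in \F_2} \Bw$ carries the disjoint-union topology, each $\Bw$ with $\abs{w} \ge 2$ is locally compact Hausdorff by \cref{proposition: Bw principal T-bundle}, and each $\Bw$ with $\abs{w} \le 1$ is one of the locally compact Hausdorff spaces $X \times \T$, $B$, or $\overline{B}$; a disjoint union of locally compact Hausdorff spaces is locally compact Hausdorff.

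Next I would pin down the unit space. For $(w,c) \in \EE$ we have $(w,c)^{-1} = (w^{-1},\overline{c})$, and \cref{lemma: c and c bar composable} shows that $\big((w,c),(w,c)^{-1}\big)$ and $\big((w,c)^{-1},(w,c)\big)$ are composable with
\[
(w,c)(w,c)^{-1} = \big(\ve,(\pwinv(\overline{c}),1)\big) \qquad \text{and} \qquad (w,c)^{-1}(w,c) = \big(\ve,(\pw(c),1)\big),
\]
and moreover, since $(c,\overline{c}) \in \Cb{w,w^{-1}}$, that $\pwinv(\overline{c}) = \alpha_w(\pw(c))$. These products lie in $\Eo \coloneqq \{\ve\} \times (X \times \{1\})$, which I identify with $X$, and I take them to be $r(w,c)$ and $s(w,c)$. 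Then $r(w',c') = \big(\ve,(\alpha_{w'}(\pb{w'}(c')),1)\big)$, while $(c,c') \in \Cb{w,w'}$ holds exactly when $\pw(c) = \alpha_{w'}(\pb{w'}(c'))$, so $\big((w,c),(w',c')\big)$ is composable precisely when $s(w,c) = r(w',c')$. For a composable pair, \cref{equation: right-fibre-preserving multiplication} gives $\pb{ww'}\big(\psi_{w,w'}([c,c'])\big) = \pb{w'}(c')$, whence $s\big((w,c)(w',c')\big) = s(w',c')$, and, using that $\alpha$ is an action so $\alpha_w \circ \alpha_{w'} = \alpha_{ww'}$, also $r\big((w,c)(w',c')\big) = r(w,c)$. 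Associativity of the multiplication is exactly \cref{equation: extended associativity}, and inversion is involutive since $\overline{\overline{c}} = c$.

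The one remaining algebraic point is the unit law. By associativity, $r(w,c)\,(w,c) = \big((w,c)(w,c)^{-1}\big)(w,c) = (w,c)\big((w,c)^{-1}(w,c)\big) = (w,c)\,s(w,c)$, so it suffices to prove $(w,c)\,s(w,c) = (w,c)$, that is,
\[
\psi_{w,\ve}\big(\big[c,(\pw(c),1)\big]\big) = c \qquad \text{for all } (w,c) \in \EE.
\]
I expect this to be the only step needing a genuine argument, and I would prove it by induction on $\abs{w}$, in the style of the proof of \cref{lemma: c and c bar composable}. The base case $\abs{w} \le 1$ is \cref{lemma: reducing the products}\cref{item: BFP identity}. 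For the inductive step I would write $c = \psi_{w_1,w''}([c_1,c''])$ with $w'' = w_2 \dotsm w_{\abs{w}}$ (so $\abs{w''} = \abs{w}-1$ and $c'' \in \Bb{w''}$), observe that $\pw(c) = \pb{w''}(c'')$ by \cref{equation: right-fibre-preserving multiplication}, and apply \cref{equation: extended associativity} to the triple $(w_1,w'',\ve)$ together with the inductive hypothesis $\psi_{w'',\ve}\big([c'',(\pb{w''}(c''),1)]\big) = c''$. Granting this, $r(w,c)$ and $s(w,c)$ are idempotents that act as two-sided identities, $(w,c)^{-1}$ is a genuine inverse, and $\EE$ is a groupoid with unit space $X$.

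Finally, for the topology: continuity of the multiplication was already established at the end of \cref{section: multiplication} (each $\psi_{w,w'}$ being a homeomorphism onto the clopen set $\Bb{ww'} \subseteq \EE$), and continuity of inversion is immediate from the disjoint-union topology, since on each component $\Ew$ it is $(w,c) \mapsto (w^{-1},\overline{c})$ and $c \mapsto \overline{c}$ is a homeomorphism $\Bw \to \Bwinv$. Hence the structure maps are continuous and $\EE$ is a locally compact Hausdorff groupoid. The real obstacle is not conceptual but organisational: carrying the bookkeeping above through cleanly, the crux being the inductive proof of the unit law, which reuses only techniques already present in \cref{lemma: reducing the products,lemma: c and c bar composable}.
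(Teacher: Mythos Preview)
Your proposal is correct and follows essentially the same route as the paper: identify $r$ and $s$ via \cref{lemma: c and c bar composable}, invoke \cref{equation: extended associativity} for associativity, check the unit law, and read off continuity and local compactness from the disjoint-union structure. The only organisational difference is that where you set up an induction on $\abs{w}$ to prove $\psi_{w,\ve}\big([c,(\pw(c),1)]\big) = c$, the paper simply cites \cref{lemma: reducing the products}\cref{item: BFP identity} for this identity (implicitly relying on the ``long but standard induction'' that defined the general $\psi_{w,w'}$ to have propagated the formula for $\psi_{\ve,w}$ and $\psi_{w,\ve}$ from the base case); your explicit induction is a perfectly good way to fill that in.
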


\begin{proof}
We saw in \cref{section: multiplication} that the multiplication is continuous and associative. For each fixed $w \in \F_2$, the map $\{w\} \times \Bw \ni (w,c) \mapsto (w,c)^{-1} \in \{w^{-1}\} \times \Bwinv$ is a homeomorphism, so inversion is continuous on $\EE = \bigsqcup_{w \in \F_2} \{w\} \times \Bw$.

Fix $(w,c) \in \EE$. Then $\big((w,c)^{-1}\big)^{\!-1} = (w,c)$. Moreover, by \cref{lemma: c and c bar composable}, we have
\[
\big((w,c), (w^{-1},\overline{c})\big), \big((w^{-1},\overline{c}), (w,c)\big) \in \Ec.
\]
It follows from \cref{lemma: c and c bar composable} that
\begin{align*}
r(w,c) \coloneqq (w,c)(w,c)^{-1} &= (w,c)(w^{-1},\overline{c}) = \big(ww^{-1}, \, \psi_{w,w^{-1}}([c,\overline{c}])\big) \\
&= \big(\ve, (\pwinv(\overline{c}),1)\big) = \big(\ve, (\alpha_w(\pw(c)),1)\big),
\end{align*}
and
\[
s(w,c) \coloneqq (w,c)^{-1}(w,c) = (w^{-1},\overline{c})(w,c) = \big(w^{-1}w, \, \psi_{w^{-1},w}([\overline{c},c])\big) = \big(\ve, (\pw(c),1)\big).
\]
Since $\pw\colon \Bw \to X$ is surjective, it follows that
\[
\Eo \coloneqq r(\EE) = s(\EE) = \{\ve\} \times \big(X \times \{1\}\big).
\]
Identify $\Eo$ with $X$. To see that $\Eo$ consists of multiplicative units, fix $x, y \in X$ such that
\[
r(w,c) = (\ve,(x,1)) \quad \text{ and } \quad s(w,c) = (\ve,(y,1)).
\]
Then by \cref{lemma: reducing the products}\cref{item: BFP identity}, we have
\[
r(w,c) \, (w,c) = (\ve,(x,1)) (w,c) = \big(\ve w, \, \psi_{\ve,w}\big([(x,1),c]\big)\big) = (w,c),
\]
and
\[
(w,c) \, s(w,c) = (w,c) (\ve,(y,1)) = \big(w \ve, \, \psi_{w,\ve}\big([c,(y,1)]\big)\big) = (w,c),
\]
as required. Hence $\EE$ is a topological groupoid under the given operations. We know by \cref{proposition: Bw principal T-bundle} that for each $w \in \F_2 {\setminus} \{\ve\}$, $\Bw$ is a locally compact Hausdorff space. Also, $\Bb{\ve} = X \times \T$ is a locally compact Hausdorff space. Thus, since $\EE$ has the disjoint union topology, it follows that $\EE$ is a locally compact Hausdorff groupoid.
\end{proof}

\subsection{Defining the quotient groupoid \texorpdfstring{$\GG$}{G} and proving that \texorpdfstring{$\EE \to \GG$}{E} is a twist \texorpdfstring{}{over G}}
\label{subsection: quotient groupoid and twist}

In this section we show that $\EE$ is a twist. We begin by defining the quotient groupoid $\GG$. Let
\[
\GG \coloneqq X \rtimes_\alpha \F_2 = \{ (\alpha_w(x),w,x) : x \in X, \, w \in \F_2 \} \subseteq X \times \F_2 \times X.
\]
Then $\GG$ is a groupoid under the multiplication and inversion operations
\[
(x,w,u)(u,w',y) \coloneqq (x,ww',y) \quad \text{ and } \quad (x,w,y)^{-1} \coloneqq (y,w^{-1},x).
\]
The collection
\[
\big\{ \{ (\alpha_w(x),w,x) : x \in U \} : w \in \F_2, \, U \text{ is an open subset of } X \big\}
\]
is a basis for a locally compact Hausdorff \'etale groupoid topology on $\GG$. The unit space of $\GG$ is $\Go = \{ (x,\ve,x) : x \in X \}$, which we identify with $X$. The range and source maps are given by $r(x,w,y) = x$ and $s(x,w,y) = y$. Since $\alpha_a = \sigma$ is minimal, $\GG$ is minimal. The $1$-cocycle $c_\GG\colon \GG \to \F_2$ defined by $c_\GG(\alpha_w(x),w,x) \coloneqq w$ is continuous and $\F_2$ is discrete, so $\Gw \coloneqq c_\GG^{-1}(w)$ is clopen in $\GG$ for each $w \in \F_2$. We now show that $\EE$ is a twist over $\GG$.

\begin{prop} \label{proposition: twist}
Define $\imath\colon X \times \T \to \EE$ by $\imath(x,z) \coloneqq (\ve,(x,z))$, and define $\pi\colon \EE \to \GG$ by $\pi(w,c) \coloneqq \big(\alpha_w(\pw(c)), w,\pw(c)\big)$. Then $X \times \T \xrightarrow{\imath} \EE \xrightarrow{\pi} \GG$ is a twist over $\GG$.
\end{prop}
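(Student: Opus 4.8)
The plan is to verify each of the defining properties of a twist in turn, exploiting the $\F_2$-grading and the results already established about the balanced fibred products $\psi_{w,w'}$. First I would check that $\imath$ and $\pi$ are groupoid homomorphisms. For $\imath$, this amounts to the computation $\imath(x,z)\imath(x,z') = (\ve, \psi_{\ve,\ve}([(x,z),(x,z')])) = (\ve,(x,zz')) = \imath(x,zz')$, using \cref{lemma: reducing the products}\cref{item: BFP identity} (the $d=\ve$ case, where $\psi_{\ve,\ve}$ is multiplication in the second $\T$-coordinate), together with $\imath(x,z)^{-1} = (\ve,\overline{(x,z)}) = (\ve,(x,\overline z)) = \imath(x,\overline z)$. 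For $\pi$, I would use \cref{equation: right-fibre-preserving multiplication}: if $((w,c),(w',c')) \in \Ec$ then $\pw(c) = \alpha_{w'}(\pb{w'}(c'))$, so the source of $\pi(w,c)$ is $\pw(c) = \alpha_{w'}(\pb{w'}(c'))$, which is the range of $\pi(w',c')$; and $\pb{ww'}(\psi_{w,w'}([c,c'])) = \pb{w'}(c')$ shows $\pi((w,c)(w',c')) = (\alpha_{ww'}(\pb{w'}(c')), ww', \pb{w'}(c')) = \pi(w,c)\pi(w',c')$. Compatibility with inversion follows from \cref{equation: alpha_w pw c is pwinv c bar}, which gives $\pwinv(\overline c) = \alpha_w(\pw(c))$, so $\pi(w,c)^{-1} = (\pw(c), w^{-1}, \alpha_w(\pw(c))) = \pi(w^{-1},\overline c)$. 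Continuity and surjectivity of $\pi$ are clear from the grading: on the clopen piece $\Ew$, $\pi$ restricts to $(w,c) \mapsto (\alpha_w(\pw(c)), w, \pw(c))$, which is continuous since $\pw$ and $\alpha_w$ are, and it surjects onto the clopen piece $\Gw$ because $\pw$ is surjective.

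Next I would identify $\pi^{-1}(\Go)$. Since $c_\GG \circ \pi = c_\EE$ (the grading cocycle on $\EE$), we have $\pi^{-1}(\Go) = \pi^{-1}(\Gb{\ve}) = \EE_\ve = \{\ve\}\times(X\times\T)$, and on this set $\pi(\ve,(x,z)) = (x,\ve,x)$. So $\imath$ maps $X\times\T$ bijectively onto $\pi^{-1}(\Go)$; it is a homeomorphism onto its image because $\imath$ is visibly a homeomorphism $X\times\T \to \{\ve\}\times(X\times\T)$ and the latter is open in $\EE$. For openness of $\pi$, I would argue that $\pi$ restricted to each clopen $\Ew$ is an open map onto $\Gw$: a basic open set of $\Bw$ is (the image under the quotient map of) a set built from local sections, and $\pi$ sends it to $\{(\alpha_w(x),w,x) : x \in V\}$ for $V$ open in $X$, which is basic open in $\Gw$; more cleanly, $\pw$ is an open map (it is a locally trivial bundle projection, by \cref{proposition: Bw principal T-bundle}) and $\alpha_w$ is a homeomorphism, so $(w,c)\mapsto(\alpha_w(\pw(c)),w,\pw(c))$ is open. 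Finally, centrality: I must check $\imath(r(\ve),z)\,\ve = \ve\,\imath(s(\ve),z)$ for all $\ve = (w,c) \in \EE$ and $z\in\T$. Using the formulae for $r$ and $s$ from the proof of \cref{proposition: EE groupoid}, $r(w,c) = (\ve,(\alpha_w(\pw(c)),1))$ and $s(w,c) = (\ve,(\pw(c),1))$, so $\imath(r(w,c),z) = (\ve,(\alpha_w(\pw(c)),z)) = z\cdot r(w,c)$ and $\imath(s(w,c),z) = z\cdot s(w,c)$. Thus the claim reduces to $(z \cdot r(w,c))(w,c) = (w,c)(z\cdot s(w,c))$, i.e.\ $\psi_{\ve,w}([(\alpha_w(\pw(c)),z),c]) = \psi_{w,\ve}([c,(\pw(c),z)])$; by \cref{lemma: reducing the products}\cref{item: BFP identity} both sides equal $z\cdot c$.

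The main obstacle, and the step requiring the most care, is the openness of $\pi$ together with the clean verification that $\imath$ is a homeomorphism onto $\pi^{-1}(\Go)$ — essentially all the topological (as opposed to purely algebraic) content lives there, and it hinges on knowing that each $\pw$ is an honest locally trivial bundle, which is exactly what \cref{proposition: Bw principal T-bundle} supplies. Once those are in hand, every remaining requirement in the definition of a twist (the homomorphism identities and centrality) is a direct computation with the identities \labelcref{equation: right-fibre-preserving multiplication}, \labelcref{equation: alpha_w pw c is pwinv c bar}, and \cref{lemma: reducing the products,lemma: c and c bar composable}; I would also invoke \cite[Proposition~3.4]{Boenicke2021} at the end to note that $\pi$ then automatically admits continuous local sections, so that $\pi\colon\EE\to\GG$ is a principal $\T$-bundle, although this is not strictly part of the statement. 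No new ideas beyond the grading and the structure maps already constructed should be needed.
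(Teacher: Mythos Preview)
Your proposal is correct and follows essentially the same approach as the paper: both verify the twist axioms by working graded piece by piece, using \cref{equation: right-fibre-preserving multiplication} for the homomorphism property of $\pi$, local triviality of each $\pw$ (via \cref{proposition: Bw principal T-bundle}) for openness of $\pi$, and \cref{lemma: reducing the products}\cref{item: BFP identity} for centrality. Your write-up is in fact slightly more thorough than the paper's (you explicitly check that $\imath$ is a homomorphism and that $\pi$ respects inversion via \cref{equation: alpha_w pw c is pwinv c bar}), but the substance is the same.
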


\begin{proof}
By \cref{proposition: EE groupoid}, $\EE$ is a locally compact Hausdorff groupoid. We have $\Eo = \imath(X \times \{1\})$, and $\imath$ is a continuous groupoid homeomorphism onto the open set $\imath(X \times \T) = \Eb{\ve} = \pi^{-1}(\Go)$ that restricts to a homeomorphism of unit spaces. To see that $\pi$ is a homomorphism, fix $\big((w,c),(w',c')\big) \in \Ec$. Then $(c,c') \in \Cb{w,w'}$, so $\pw(c) =\alpha_{w'}\big( \pb{w'}(c')\big)$. Write $cc' \coloneqq \psi_{w,w'}([c,c'])$. By \cref{equation: right-fibre-preserving multiplication}, $\pb{ww'}(cc') = \pb{w'}(c')$, and so
\begin{align*}
\pi(w,c) \, \pi(w',c') &= \big(\alpha_w(\pw(c)),w,\pw(c)\big) \big(\alpha_{w'}(\pb{w'}(c')),w',\pb{w'}(c')\big) \\
&= \big(\alpha_w(\pw(c)),ww',\pb{w'}(c')\big) = \big(\alpha_{ww'}(\pb{w'}(c')),ww',\pb{w'}(c')\big) \\
&= \big(\alpha_{ww'}(\pb{ww'}(cc')),ww',\pb{ww'}(cc')\big) = \pi(ww', cc') = \pi\big((w,c)(w',c')\big),
\end{align*}
and hence $\pi$ is a groupoid homomorphism. Since $\pw$ is a continuous surjection, $\pi$ is a continuous surjection. For all $x \in X$, $\pi(\ve,(x,1)) = (x,\ve,x)$, so $\pi$ restricts to a homeomorphism of unit spaces. To see that $\pi$ is an open map, first note that for each $w \in \F_2$, it follows by local triviality of the principal $\T$-bundle $\pw\colon \Bw \to X$ that $\pw$ is an open map (using an argument similar to the proof of \cite[Lemma~2.7(a)]{Armstrong2022}). So for each fixed $w \in \F_2$ and any open set $V \subseteq \Bw$, we have $\pi(\{w\} \times V) = \{ (\alpha_w(x),w,x) : x \in \pw(V) \}$, which is open in $\GG$ because $\pw(V)$ is open in $X$. Since $\EE$ has the disjoint union topology, it follows that $\pi$ is an open map. To see that the extension is central, fix $x, y \in X$, $z \in \T$, and $(w,c) \in \EE$ such that $\big(\imath(x,z),(w,c)\big), \big((w,c),\imath(y,z)\big) \in \Ec$. Then $x = \alpha_w\big(\pw(c)\big) = \alpha_w(y)$, and by \cref{lemma: reducing the products}\cref{item: BFP identity}, $\psi_{\ve,w}\big([(x,z),c]\big) = z \cdot c = \psi_{w,\ve}\big([c,(y,z)]\big)$. Thus,
\[
\imath(x,z) \, (w,c) = (\ve,(x,z))(w,c) = (w, z \cdot c) = (w,c)(\ve,(y,z)) = (w,c) \, \imath(y,z),
\]
so $\imath(X \times \T)$ is central in $\EE$.
\end{proof}

\begin{remark} \label{remark: pi restricted to Eb is nontrivial}
For $(x,w,y) \in \GG$, we have $\alpha_w(x) = y$. Hence $(x,w,y) \mapsto (w,y)$ is a homeomorphism $\GG \to \F_2 \times X$ that carries each $\Gw$ onto $\{w\} \times X$. Thus $s_b\colon \GGb \ni (x,b,x) \mapsto x \in X$ and $j_b\colon \Bb{b} \ni c \mapsto (b,c) \in \Eb{b}$ are homeomorphisms that satisfy $s_b \circ \pi\restr{\EEb} \circ j_b = \pb{b}$, where $\pb{b}\colon \Bb{b} \to X$ is our original nontrivial principal $\T$-bundle $p\colon B \to X$ from \cref{subsection: nontrivial T-bundle}.
\end{remark}

\subsection{Studying the isotropy and proving the desired properties}

In this section we prove that the twist of \cref{proposition: twist} is not induced by a $2$-cocycle, even when restricted to the interior of the isotropy. Recall that we write $\IG$ for the topological interior of the isotropy $\Iso(\GG)$ of $\GG$. Define the ``$a$-counting map'' $\ell_a\colon \F_2 \to \Z$ to be the homomorphism defined on generators $a, b \in \F_2$ by
\[
\ell_a(a) = 1 \quad \text{ and } \quad \ell_a(b) = 0.
\]

\begin{lemma} \label{lemma: isotropy}
For the groupoid $\GG = X \rtimes_\alpha \F_2$ defined in \cref{subsection: quotient groupoid and twist}, we have
\[
\IG = \Iso(\GG) = \bigsqcup_{w \in \ker(\ell_a)} \! \Gw \,\simeq\, X \times \ker(\ell_a) \quad \text{ and } \quad \IE = \pi^{-1}(\IG) = \Iso(\EE) = \bigsqcup_{w \in \ker(\ell_a)} \! \Ew.
\]
\end{lemma}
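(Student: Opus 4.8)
The whole lemma turns on the fact that the minimal homeomorphism $\sigma = \alpha_a$ has \emph{no periodic points}. First I would record why: $X$ carries a nontrivial principal $\T$-bundle, so $X$ is not discrete, hence (being Hausdorff) infinite; and a minimal homeomorphism of an infinite space cannot have a finite orbit, since such an orbit would be a nonempty proper closed invariant set. Consequently $\sigma^n$ is fixed-point-free for every $n \in \Z {\setminus} \{0\}$. Since $\alpha_b = \id_X$, we have $\alpha_w = \sigma^{\ell_a(w)}$ for all $w \in \F_2$, so $\alpha_w$ has a fixed point if and only if $\ell_a(w) = 0$, and in that case $\alpha_w = \id_X$.

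I would then read off $\Iso(\GG)$ from the definition $\GG = \{(\alpha_w(x),w,x) : x \in X,\, w \in \F_2\}$: the element $(\alpha_w(x),w,x)$ has range $\alpha_w(x)$ and source $x$, so it lies in $\Iso(\GG)$ exactly when $\alpha_w(x) = x$. By the previous paragraph this forces $w \in \ker(\ell_a)$, and then the element is simply $(x,w,x)$ for an arbitrary $x \in X$. Hence $\Iso(\GG) = \bigsqcup_{w \in \ker(\ell_a)} \Gw$. Each $\Gw$ is clopen in $\GG$ (as noted in \cref{subsection: quotient groupoid and twist}), so this union is open, and therefore $\Iso(\GG)$ coincides with its interior $\IG$. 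Restricting the \'etale topology to $\Gw$ for $w \in \ker(\ell_a)$, the basic open sets $\{(x,w,x) : x \in U\}$ exhibit $\Gw \ni (x,w,x) \mapsto x \in X$ as a homeomorphism; since the $\Gw$ are clopen and $\ker(\ell_a)$ is discrete, and since $(x,w,x)(x,w',x) = (x,ww',x)$, this assembles into an isomorphism of topological groupoids $\IG \simeq X \times \ker(\ell_a)$ (the trivial group bundle with fibre $\ker(\ell_a)$).

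For $\EE$ I would first note the general identity $\pi^{-1}(\IG) = \Iso(\EE)$: using $\IG = \Iso(\GG)$, if $\pi(\ve) \in \Iso(\GG)$ then $\pi(r(\ve)) = r(\pi(\ve)) = s(\pi(\ve)) = \pi(s(\ve))$, and since $\pi$ is injective on the unit space (\cref{proposition: twist}) this gives $r(\ve) = s(\ve)$; the reverse inclusion is immediate because $\pi$ is a homomorphism. To describe this set explicitly I would invoke the formulas $r(w,c) = (\ve,(\alpha_w(\pw(c)),1))$ and $s(w,c) = (\ve,(\pw(c),1))$ from the proof of \cref{proposition: EE groupoid} (via \eqref{equation: alpha_w pw c is pwinv c bar}): thus $(w,c) \in \Iso(\EE)$ iff $\alpha_w(\pw(c)) = \pw(c)$, and since $\pw$ is surjective and $\alpha_w$ is fixed-point-free unless $w \in \ker(\ell_a)$, this holds for some---equivalently every---$c \in \Bw$ precisely when $w \in \ker(\ell_a)$. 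Hence $\Iso(\EE) = \bigsqcup_{w \in \ker(\ell_a)} \Ew$. (Alternatively, since $\pi(\Ew) = \Gw$ and $\pi(\Eb{w'}) \subseteq \Gb{w'}$ for every $w'$, we have $\pi^{-1}(\Gw) = \Ew$, so this last equality follows at once from the description of $\IG$.)

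There is no real obstacle here: given the structure of $\GG$ and $\EE$ from the earlier propositions, the argument is bookkeeping. The one genuine ingredient is that no nonzero power of $\sigma$ has a fixed point---which is exactly where the hypothesis that $\sigma$ is a minimal homeomorphism of the (necessarily infinite) space $X$ enters, and without which the lemma would fail---together with the observation that the decomposition of $\Iso(\GG)$ into the clopen pieces $\Gw$ makes it open, hence equal to $\IG$.
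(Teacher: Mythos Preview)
Your proof is correct and follows essentially the same route as the paper's: identify $\Iso(\GG)$ via the absence of periodic points for $\sigma$ (using that $X$ is infinite because it carries a nontrivial $\T$-bundle), observe that the resulting union of clopen $\Gw$'s is open so $\IG = \Iso(\GG)$, and then pull back through $\pi$. The only differences are that the paper cites \cite[Corollary~2.11(b)]{Armstrong2022} for $\IE = \pi^{-1}(\IG) = \Iso(\EE)$ whereas you prove $\pi^{-1}(\Iso(\GG)) = \Iso(\EE)$ directly via injectivity of $\pi$ on units, and you don't explicitly close the loop by saying $\IE = \Iso(\EE)$---but this is immediate since your description $\Iso(\EE) = \bigsqcup_{w \in \ker(\ell_a)} \Ew$ is a union of open sets.
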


\begin{proof}
By definition,
\[
\Iso(\GG) = \{ (\alpha_w(x),w,x) : x \in X, \, w \in \F_2, \, \alpha_w(x) = x \}.
\]
Since the $\T$-bundle $\pb{b}\colon \Bb{b} \to X$ is nontrivial, $X$ is not discrete, and in particular, every dense subset of $X$ is infinite. Thus, if $\sigma^{\ell_a(w)}(x) = \alpha_w(x) = x$, then minimality of $\sigma$ implies that $\ell_a(w) = 0$ (for if $\ell_a(w) > 0$ then the orbit of $x$ under $\sigma$ is finite). Thus
\[
\Iso(\GG) = \{(x,w,x) \in \GG : x \in X, \, w \in \F_2, \, \ell_a(w) = 0\} = \bigsqcup_{w \in \ker(\ell_a)} \! \Gw
\]
is homeomorphic to $X \times \ker(\ell_a)$. Since $\Gw = c_\GG^{-1}(w)$ is open in $\GG$ for each $w \in \F_2$, $\Iso(\GG)$ is a union of open sets and is therefore open, and so $\IG = \Iso(\GG)$.

By the proof of \cite[Corollary~2.11(b)]{Armstrong2022}, we have $\IE = \pi^{-1}(\IG) = \pi^{-1}(\Iso(\GG)) = \Iso(\EE)$. Since $\pi^{-1}(\Gw) = \Ew$ for each $w \in \F_2$, it follows that
\[
\IE \,=\, \pi^{-1}(\IG) \,=\, \bigsqcup_{w \in \ker(\ell_a)} \! \pi^{-1}(\Gw) \,=\, \bigsqcup_{w \in \ker(\ell_a)} \! \Ew. \qedhere
\]
\end{proof}

\begin{lemma}
The restricted twist $\pi\restr{\IE}\colon \IE \to \IG$ is not induced by a $2$-cocycle.
\end{lemma}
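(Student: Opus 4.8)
The plan is to argue by contradiction, combining two facts already at hand: first, a twist that is induced by a $2$-cocycle is topologically trivial, so its projection map admits a continuous global section; second, by \cref{lemma: isotropy} and \cref{remark: pi restricted to Eb is nontrivial}, the restricted twist $\IE$ contains a copy of the original nontrivial principal $\T$-bundle $p\colon B \to X$, sitting over the clopen piece $\GGb \subseteq \IG$.

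So suppose, for contradiction, that $\pi\restr{\IE}$ were induced by a $2$-cocycle, and let $S\colon \IG \to \IE$ be a continuous map with $\pi\restr{\IE} \circ S = \id_{\IG}$. Since $\ell_a(b) = 0$, we have $b \in \ker(\ell_a)$, so \cref{lemma: isotropy} gives $\GGb \subseteq \IG$ and $\EEb = \pi^{-1}(\GGb) \subseteq \IE$; moreover, for each $\gamma \in \GGb$ we have $S(\gamma) \in \pi^{-1}(\GGb) = \EEb$, so $S$ restricts to a continuous map $S\restr{\GGb}\colon \GGb \to \EEb$ satisfying $\pi\restr{\EEb} \circ S\restr{\GGb} = \id_{\GGb}$. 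Using the homeomorphisms $s_b\colon \GGb \to X$ and $j_b\colon \Bb{b} \to \EEb$ of \cref{remark: pi restricted to Eb is nontrivial}, which satisfy $s_b \circ \pi\restr{\EEb} \circ j_b = \pb{b}$, I would then set $\Sigma \coloneqq j_b^{-1} \circ S\restr{\GGb} \circ s_b^{-1}\colon X \to \Bb{b}$. This is continuous as a composition of continuous maps, and a one-line computation with that identity yields $\pb{b} \circ \Sigma = \id_X$; hence $\Sigma$ is a continuous global section of $\pb{b} = p\colon B \to X$, contradicting the nontriviality of $p$ fixed in \cref{subsection: nontrivial T-bundle}.

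This is essentially the argument that concludes Kumjian's example in \cref{section: Kumjian's example}, and I do not expect any genuine obstacle: all the substantive work (verifying that $\EE \to \GG$ is a twist, computing $\IE$, and locating the copy of $\pb{b}$ inside it) has already been carried out. The only points requiring care are bookkeeping: that ``induced by a $2$-cocycle'' really does supply a continuous global section of $\pi\restr{\IE}$, and that restricting such a section to $\GGb$ genuinely lands in $\EEb$, which is immediate from $\pi^{-1}(\GGb) = \EEb$.
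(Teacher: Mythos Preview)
Your argument is correct and matches the paper's own proof essentially line for line: assume a continuous global section of $\pi\restr{\IE}$, restrict it to $\GGb \subseteq \IG$ using \cref{lemma: isotropy}, and then transport via the homeomorphisms of \cref{remark: pi restricted to Eb is nontrivial} to obtain a continuous global section of $p\colon B \to X$, contradicting nontriviality. The only difference is cosmetic---you spell out the composite $j_b^{-1} \circ S\restr{\GGb} \circ s_b^{-1}$ explicitly, whereas the paper just invokes the remark.
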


\begin{proof}
Suppose for contradiction that $\pi\restr{\IE}\colon \IE \to \IG$ is induced by a $2$-cocycle. Then $\pi\restr{\IE}$ admits a continuous global section $\Sigma\colon \IG \to \IE$. By \cref{lemma: isotropy}, we have $\GGb \subseteq \IG$ and $ \EEb = \pi^{-1}(\GGb) \subseteq \IE$. So $\Sigma\restr{\GGb}$ is a continuous global section of $\pi\restr{\EEb} \colon \EEb \to \GGb$. By \cref{remark: pi restricted to Eb is nontrivial}, this gives a continuous global section of the principal $\T$-bundle $p\colon B \to X$, contradicting that the bundle is nontrivial.
\end{proof}

\vspace{2ex}

\vspace{2ex}
\end{document}